\def\C{\Bbb C}
\def\CC{\mathcal C}
\def\hol{\mathcal O}
\def\O{\Omega}
\def\spc{$\indent$}
\def\k{\kappa}
\def\endofproof{\hfill \square}
\def\vp{\varphi}
\def\ov{\overline}
\def\lk{ l^{\kappa}}
\def\Cn{\mathbb{C}^n}
\newtheorem{thm}{Theorem}
\newtheorem{prop}[thm]{Proposition}
\newtheorem{cor}[thm]{Corollary}
\newtheorem{defn}[thm]{Definition}
\newtheorem{lem}[thm]{Lemma}
\title[Local and global notions of visibility]
{Local and global notions of visibility with respect to Kobayashi distance, a comparison}
\author{Nikolai Nikolov, Ahmed Yekta Ökten, Pascal J. Thomas} 
\address{N. Nikolov\\
	Institute of Mathematics and Informatics\\
	Bulgarian Academy of Sciences\\
	Acad. G. Bonchev Str., Block 8\\
	1113 Sofia, Bulgaria}
\address{Faculty of Information Sciences\\
	State University of Library Studies
	and Information Technologies\\
	69A, Shipchenski prohod Str.\\
	1574 Sofia, Bulgaria}
\email{nik@math.bas.bg}
\address{A. Y. Ökten\\
	Institut de Math\'ematiques de Toulouse; UMR5219 \\
	Universit\'e de Toulouse; CNRS \\
	UPS, F-31062 Toulouse Cedex 9, France} \email{ahmed$\_$yekta.okten@math.univ-toulouse.fr}
\address{P. J. Thomas\\	Institut de Math\'ematiques de Toulouse; UMR5219 \\Universit\'e de Toulouse; CNRS \\	UPS, F-31062 Toulouse Cedex 9, France} \email{pascal.thomas@math.univ-toulouse.fr}
\thanks{The authors were partially supported by the Bulgarian National Science Fund,
	Ministry of Education and Science of Bulgaria under contract KP-06-Rila/2 and by Campus France under the contract PHC Rila 48135TJ, Invariant Functions In Complex Analysis. \\
	The second author received support from the University Research School EUR-MINT
	(State support managed by the National Research Agency for Future Investments
	program bearing the reference ANR-18-EURE-0023). \\
	This paper was written during the visit of second and third author at Institute of Mathematics and Informatics, Bulgarian Academy of Sciences.}
\subjclass[2010]{32F45}
\begin{document}

\keywords{Kobayashi distance and Kobayashi-Royden metric, visibility,
localization, geodesics}

\begin{abstract} In this note, we introduce the notion of visible boundary points with respect to Kobayashi distance for domains in $\Cn$. Following the work of Sarkar \cite{S}, we obtain additive and multiplicative localization results about Kobayashi distance near visible boundary points. Then using the additive localization result, we show that visibility property with respect to Kobayashi distance is a local property of the boundary points and it doesn't depend on the domain.
\end{abstract}

\maketitle
\section{Introduction}
The definition of invariant distances in complex analysis stems from the properties of holomorphic mappings,
which are defined on open sets. In one or several dimensions, the question of extension of those mappings to 
the boundary of the open set (and to which boundary?) is of interest.  
The work of Balogh and Bonk \cite{BB} introduced the metric property of Gromov 
hyperbolicity into this subject, along with 
an identification of the Gromov boundary with the Euclidean boundary, to provide another proof of the Fefferman
extension theorem for mappings of strictly pseudoconvex domains. \\
\spc Another geometric property that geodesic spaces may have, which can serve
as a sort of substitute to Gromov hyperbolicity, is  visibility. Informally speaking, a metric space 
satisfies visibility property if geodesics joining  points approaching distinct points on the boundary pass through a compact set depending on those distinct boundary points. 
However, in general, it is not known whether geodesics for the Kobayashi-Royden pseudometric exist. Bharali and Zimmer \cite{BZ} introduced a wider notion of visibility which holds for almost-geodesics with respect to 
the Kobayashi-Royden metric. They also established some sufficient conditions for it in terms of the growth of the Kobayashi distance and Kobayashi-Royden pseudometric.\\ 
\spc This was followed by 
extensions of those results  in \cite{BM}, and other examples of sufficient and necessary 
conditions for visibility in \cite{BNT} and \cite{CMS}. Most of the results in those papers relied on conditions
which were local in terms of the Euclidean boundary of the domain, that is to say that e.g. to prove
visibility of a domain $\Omega$ (with respect to $\partial \Omega$) one requires properties to be tested on $\Omega \cap U_p$, for all 
$p\in \partial \Omega$, where $U_p$ is an appropriate neighborhood of $p$ in $\C^n$.  As the 
Kobayashi distance depends on the domain, \cite{BNT} needed some localization results about Kobayashi distances
along the lines of \cite{FR}. \\ 
\spc It is then natural to conjecture that visibility can always be localized, that is to say,
that a domain $\Omega$ is visible with respect to the Kobayashi-Royden metric of $\Omega$ 
and to $\partial \Omega$ (we will define this
precisely later) if and only if for any $p\in \partial \Omega$, 
there exists a neighborhood $U_p$ of $p$ such that $\Omega \cap U_p$ is visible 
with respect to the Kobayashi-Royden metric of $\Omega \cap U_p$ 
and to $U_p \cap \partial \Omega$. This problem was first studied in \cite{BGNT} and the authors were able to localize visibility under global assumptions such as Gromov hyperbolicity. \\
\spc The goal of this note is to show without  any global assumptions that visibility is indeed a local condition, depending on the boundary near a point. The main tool of the proof is additive localization of Kobayashi distance near visible points. These  localization results were recently established in \cite{S}. The paper is organized as follows. \\
\spc In   Section \ref{secdef}, we recall the definitions about Kobayashi distance and Kobayashi-Royden metric. Further, we introduce the notion of \emph{visible point} and
relate it to the visibility definitions in the literature. \\
\spc In Section \ref{secloc}, we prove Theorem \ref{localizationofkobayashidistance} which gives additive localization for Kobayashi distance near visible points. Our proof follows the proof given in \cite{S}, however only using local hypotheses. We also prove Theorem \ref{multiplicativelocalization}, which gives multiplicative localization of Kobayashi lengths near visible points. \\
\spc In section \ref{secvis}, we prove Theorem \ref{definitionsoflambdavisibilityareequivalent} that shows that local visibility at a boundary point is equivalent to global visibility at that point. In particular, it leads to Theorem \ref{visibilityisapropertyofthepoint} which tells us that if a boundary point is visible for a domain, then it is visible for any domain which locally looks like the initial one.
\section{Visible points and visibility property}
\label{secdef}
Let $\Omega$ be a domain in $\C^n$, $z,w\in \Omega$ and $v\in\Cn$.
Recall that the Kobayashi pseudodistance $k_\Omega$ is the largest pseudodistance
which does not exceed the Lempert function
$$ l_\O(z,w):=\tanh^{-1} \tilde{l}_\O(z,w),$$
where $\Delta$ is the
unit disc and $\tilde{l}_\Omega(z,w):=\inf\{|\alpha|:\exists\vp\in\hol(\Delta,\O)
\hbox{ with }\varphi(0)=z,\varphi(\alpha)=w\} $.\\  
\spc Also recall the definition of Kobayashi-Royden pseudometric,
$$\k_\O(z;v)=\inf\{|\alpha|:\exists\vp\in\O(\Delta,\O)\hbox{ with }
\varphi(0)=z,\alpha\varphi'(0)=v\}.$$
\spc Kobayashi-Royden length of an absolutely continuous curve $\gamma:I\rightarrow \O$ is defined as $$ \lk_{\O}(\gamma):=\int_{I} \k_\O(\gamma(t),\gamma'(t))dt. $$
\spc By \cite{R,V}, it turns out that $k_\Omega$ is the integrated form of the Kobayashi-Royden pseudometric. That is 
$ k_\O(z,w)=\inf\lk_{\O}(\gamma)$ where the infimum is taken over all absolutely continuous curve joining $z$ to $w$. \\
\spc We say that a domain $\O\subset \Cn$ is hyperbolic if $k_\O$ is a distance. This holds, for instance, for any bounded domain and for convex domains containing no affine complex lines. \\
\spc In order to study the local behaviour of invariant metrics,
we introduce a notion of hyperbolicity at boundary points. In fact, our definition is a generalization to boundary points of the characterization of hyperbolicity given in \cite[Proposition 3.1]{NP}.
\begin{defn}
	Let $\O$ be a domain in $\Cn$, $p\in\partial \O$. If $\O$ is bounded, we say that $\O$ is
	\emph{hyperbolic} at $p$ for any $p\in \partial \O$. If $\O$ is unbounded, we say that $\O$ is \emph{hyperbolic} at $p$
	if we have 
	\begin{equation}
		\label{newlocalhyperbolicity}
		\liminf_{z\to p,w\to \infty} l_\O(z,w) > 0. \end{equation}
\end{defn}
For $A,B\subset \O$ we denote $ l_\O(A,B):=\inf_{a\in A, b\in B} l_\O(a,b)$ and $k_\O(A,B):=\inf_{a\in A, b\in B} k_\O(a,b)$.
By definition \eqref{newlocalhyperbolicity} holds if and only if we can find a bounded neighbourhood $U$ of $p$ and another neighbourhood $V\subset\subset U$ of $p$ such that we have
\begin{equation}\label{localhyperbolicity1}
	l_\O(\O\cap V, \O\setminus U) > 0.
\end{equation}
\spc In fact, the latter condition holds on any bounded domain $\O$, for two arbitrarily small neighbourhoods $V\subset\subset U$ of $p\in\partial \O$. This is a motivation for why we set bounded domains to be hyperbolic at any boundary point. \\
\spc One can observe that this property is even stronger.

\begin{prop}\label{notionsoflocalhyperbolicity}
	Let $\O$ be an unbounded domain in $\Cn$ and $p\in\partial \O$. $\O$ is hyperbolic at $p$ if and only if there exists a bounded neighbourhood $V$ of $p$, such that for any  other two neighbourhoods of $p$ satisfying $V'\subset\subset U'\subset V$ we have 
	\begin{equation}\label{strongerlocalhyperbolicity}
		k_\O(\O\cap V', \O\setminus U') > 0.
	\end{equation}
\end{prop}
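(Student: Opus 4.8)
I would treat the two implications separately, the reverse one being immediate. Since $k_\O$ is by definition the largest pseudodistance dominated by the Lempert function, $k_\O\le l_\O$; hence \eqref{strongerlocalhyperbolicity}, applied to any single admissible pair $V'\subset\subset U'\subset V$ (say two small balls about $p$), forces $l_\O(\O\cap V',\O\setminus U')>0$. As $U'\subset V$ is bounded, this is precisely \eqref{localhyperbolicity1}, which the discussion above identifies with hyperbolicity at $p$. So the content is entirely in the forward direction.

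For the forward direction, I would start from \eqref{localhyperbolicity1}: fix bounded neighbourhoods $V_0\subset\subset U_0$ of $p$ with $\delta_0:=l_\O(\O\cap V_0,\O\setminus U_0)>0$, and take for $V$ any bounded neighbourhood of $p$ with $\ov V\subset V_0$. The strategy is to upgrade the Lempert separation into a pointwise lower bound for the Kobayashi--Royden metric on $\O\cap V_0$, and then to integrate that bound across the shell between $V'$ and $U'$.

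For the metric bound, fix $\zeta\in\O\cap V_0$, set $a_0:=\tanh\delta_0$, and let $\vp\in\hol(\Delta,\O)$ satisfy $\vp(0)=\zeta$. I claim $\vp(\{|w|<a_0\})\subset U_0$: were $\vp(w_0)\in\O\setminus U_0$ with $|w_0|<a_0$, then $l_\O(\zeta,\vp(w_0))\le\tanh^{-1}|w_0|<\delta_0$, contradicting the definition of $\delta_0$. Choosing a ball $B(q,R)\supset U_0$, the map $w\mapsto\vp(a_0w)-q$ is then holomorphic and bounded by $R$ on $\Delta$, so the Cauchy estimate gives $a_0|\vp'(0)|\le R$. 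Reading off the definition of $\k_\O$, this produces
\begin{equation*}
	\k_\O(\zeta;v)\ \ge\ \frac{a_0}{R}\,|v|=:c_0|v|,\qquad \zeta\in\O\cap V_0,\ v\in\Cn.
\end{equation*}

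Finally, given any $V'\subset\subset U'\subset V$ and points $z\in\O\cap V'$, $w\in\O\setminus U'$, I would use that $k_\O$ is the integrated form of $\k_\O$ and bound $\lk_\O(\g)$ from below for every absolutely continuous curve $\g$ joining $z$ to $w$. Such a $\g$ must contain a subarc joining $\ov{V'}$ to $\bd U'$ inside the shell $\ov{U'}\setminus V'\subset\ov V\subset V_0$, of Euclidean length at least $\operatorname{dist}(\ov{V'},\Cn\setminus U')>0$; integrating the metric bound over this subarc gives $\lk_\O(\g)\ge c_0\operatorname{dist}(\ov{V'},\Cn\setminus U')$, whence
\begin{equation*}
	k_\O(\O\cap V',\O\setminus U')\ \ge\ c_0\,\operatorname{dist}(\ov{V'},\Cn\setminus U')\ >\ 0.
\end{equation*}
The one genuinely delicate step is the metric estimate: it relies on the boundedness of the \emph{outer} neighbourhood $U_0$ (rather than of $\O$) to convert, via the Cauchy estimate, the Lempert separation into the uniform Kobayashi--Royden lower bound that the integration argument needs.
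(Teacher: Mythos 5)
Your argument is correct, but it reaches the conclusion by a genuinely different route from the paper. For the forward direction the paper invokes Royden's localization lemma \eqref{roydenslocalizationlemma} to obtain $\k_{\O\cap U}(z;v)\le C\,\k_\O(z;v)$ on $\O\cap V$, truncates curves so that they stay in $\ov V$, and then reduces the positivity of $k_\O(\O\cap V',\O\setminus U')$ to the positivity of $k_{\O\cap U}(\O\cap V',\O\setminus U')$, which holds because $\O\cap U$ is a \emph{bounded} domain. You instead bypass Royden's lemma entirely: you turn the Lempert separation $l_\O(\O\cap V_0,\O\setminus U_0)=\delta_0>0$ into the statement that every analytic disc centred in $\O\cap V_0$ maps the subdisc of radius $\tanh\delta_0$ into the bounded set $U_0$, and then a Cauchy--Schwarz-type derivative estimate yields the uniform lower bound $\k_\O(\zeta;v)\ge c_0|v|$ on $\O\cap V_0$, which you integrate over the shell between $V'$ and $U'$. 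The truncation-to-a-shell step is common to both proofs, but the key comparison differs: the paper bounds $\k_\O$ from below by $\k_{\O\cap U}/C$ and delegates the final positivity to hyperbolicity of bounded domains, whereas you bound $\k_\O$ from below by a multiple of the Euclidean metric directly. Your version is more self-contained (it in effect reproves the relevant special case of Royden's lemma, with boundedness of the outer neighbourhood $U_0$ doing the work that boundedness of $\O\cap U$ does in the paper); the paper's version has the advantage of producing the comparison \eqref{royden} between $\k_{\O\cap U}$ and $\k_\O$, which is the shape of estimate reused later in Section \ref{secloc}. Both the easy direction (via $k_\O\le l_\O$) and all the individual steps of your forward direction check out.
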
 

\begin{proof}
	It is clear that if \eqref{strongerlocalhyperbolicity} holds, then we have \eqref{localhyperbolicity1} 
	with $V', U'$ playing the part of $V, U$, hence \eqref{newlocalhyperbolicity}. So we will prove the converse. \\
	Let $V\subset\subset U$ be two neighbourhoods of $p$ such that $U$ is bounded and \eqref{localhyperbolicity1} holds.
	Recall Royden's localization lemma\cite[Proposition 13.2.10]{JP}. 
	If $\O$ is a domain in $\Cn$ and $D$ is any subdomain, we have 
	\begin{equation}\label{roydenslocalizationlemma}
		\tilde{l}_\O(z,\O\setminus D) \k_{\O\cap D}(z;v)\leq \k_\O(z;v) \: \: \: \: \: \: z\in \O\cap V,\:\: v\in\Cn.
	\end{equation} 
	Note that by \eqref{localhyperbolicity1}, taking $D=U$ above, \eqref{roydenslocalizationlemma} implies that there exists a $C>0$ such that \begin{equation}\label{royden}
		\k_{\O\cap U}(z;v)\leq C \k_\O(z;v) \: \: \: \: \: \: z\in \O\cap V,\:\: v\in\Cn.
	\end{equation} 
	Let $V'\subset\subset U'\subset V$ be any two neighbourhoods of $p$. Since Kobayashi distance is the integrated version of Kobayashi-Royden metric,
	$k_\O(\O\cap V', \O \setminus U')=\inf_{\gamma\in \Gamma} \lk_{\O}(\gamma)$,
	where $\Gamma$ is the set of absolutely continuous curves $\gamma:I\rightarrow\O$ joining points in 
	$\O\cap V'$ to points in $\O\setminus U'$. 
	Due to connectivity of curves, it is clear that for any curve in $\Gamma$, we can find another curve in $\Gamma$ of shorther or equal length whose image is contained in $\ov V$. Therefore  
	$k_\O(\O\cap V',\O\setminus U')=\inf_{\gamma\in \Gamma'} \lk_{\O}(\gamma)$,
	where $\Gamma'\subset \Gamma$ is the set of such curves whose image lie in $\ov V$. 
	On the other hand, since $U$ is bounded, $\O\cap U$ is bounded, so 
	$$
	\inf_{\gamma\in\Gamma'} \lk_{\O\cap U}(\gamma) \geq k_{\O\cap U}(\O\cap V',\O\setminus U') = c > 0.
	$$ 
	\\
	By \eqref{royden} we obtain $$k_\O(\O\cap V',\O\setminus U')=\inf_{\gamma\in \Gamma} \lk_{\O}(\gamma)=\inf_{\gamma\in \Gamma'} \lk_{\O}(\gamma) \geq \dfrac{\inf_{\gamma\in\Gamma'} \lk_{\O\cap U}(\gamma)}{C}\geq\dfrac{c}{C} .$$ 
\end{proof} 
One may argue as above to provide a new proof of \cite[Proposition 3.1]{NP}.  \\
\spc Note that hyperbolicity does not imply local hyperbolicity without some assumptions. To see this, one may study the Kobayashi distance on the domain $D:=\{(z,w)\in \C^2: z\in \Delta\setminus\{0\}, |zw| < 1\}$ which is biholomorphic to 
$\Delta\setminus\{0\}\times \Delta$, hence pseudoconvex and hyperbolic. It is not difficult to see that $D$ is not hyperbolic at points $\{(0,z):z\in \C\}\subset\partial D$. 
\begin{defn} Let $\O$ be a domain in $\Cn$.
	For $\lambda \geq 1$ and $\epsilon \geq 0$ we say that an absolutely continuous curve $\gamma:I\rightarrow \O$ is a $(\lambda,\epsilon)$-geodesic for $\O$ if for all $t_1\leq t_2\in I$ we have that
	$$\lk_{\O}(\gamma |_{[t_1,t_2]}) \leq \lambda k_\O(\gamma(t_1),\gamma(t_2)) + \epsilon.$$ 
\end{defn}
We claim that in the case where $\O$ above is hyperbolic, this definition implies the definition of $(\lambda,\epsilon)$-almost geodesics given in \cite{BZ}. To see this notice that on hyperbolic domains any absolutely continuous curve can be reparametrized with respect to Kobayashi-Royden length, that is we can parametrize $\gamma$ so that $\lk_\O (\gamma|_{[t_1,t_2]}) = |t_1-t_2|$ for all $t_1\leq t_2 \in I$. In particular, we have $\k_\O(\gamma(t);\gamma'(t))=1$ almost everywhere so $\k_\O(\gamma(t);\gamma'(t))\leq \lambda$ almost everywhere. This can be shown with arguments similar to the arguments in the proof of \cite[Proposition 4.4.]{BZ}. Due to this parametrization and the definition of $(\lambda,\epsilon)$-geodesics we obtain 
$$  k_\O(\gamma(t_1),\gamma(t_2))\leq \lambda k_\O(\gamma(t_1),\gamma(t_2)) + \epsilon \leq \lambda \lk_\O(\gamma|_{[t_1,t_2]}) + \epsilon = \lambda|t_1-t_2|+\epsilon$$
and $$ k_\O(\gamma(t_1),\gamma(t_2)) \geq \lambda^{-1} \lk_\O(\gamma|_{[t_1,t_2]}) -\lambda^{-1}\epsilon = \lambda^{-1}|t_1-t_2| -\lambda^{-1}\epsilon \geq \lambda^{-1}|t_1-t_2|-\epsilon$$ so the claim follows. \\
\spc Unless it is otherwise noted, we will assume that any $(\lambda,\epsilon)$-geodesic on hyperbolic domains is parametrized as a $(\lambda,\epsilon)$-geodesic in the sense of \cite{BZ}. \\
\spc For brevity, we will say $\epsilon$-geodesics for $(1,\epsilon)$-geodesics. As the Kobayashi distance is given as the infimum of the Kobayashi-Royden lengths of the curves, we see that for any $\epsilon>0$ and any two points, we can find 
$\epsilon$-geodesics (hence, $(\lambda,\epsilon)$-geodesics) joining them. Also, using the triangle inequality, one may observe that for $\epsilon$-geodesics, if the condition in the definition is satisfied with the endpoints, then it holds everywhere. 
\\
\spc Let us introduce the notion of $\lambda$-visible points. 
Informally, being a $\lambda$-visible point means that a $(\lambda,\epsilon)$-geodesic 
with an extremity ``near" that point avoids the boundary immediately. Explicitly: 
\begin{defn} 
	Let $\O\subset \Cn$, $\lambda \geq 1$ and $\epsilon\geq 0$. We say that $p\in \partial \O$ is a $(\lambda,\epsilon)$-\emph{visible point for $\O$} if we have the following property: for any bounded neighbourhood $U$ of $p$, there exist $V\subset\subset U$ and a compactum $K_{\lambda,\epsilon}\subset\subset \O$, such that if $\gamma:I\rightarrow \O$ is a $(\lambda,\epsilon)$-geodesic for $\O$ which joins a point in $\O\cap V$ to a point in $\O\setminus U$, then $\gamma(I)\cap K_{\lambda,\epsilon}\neq\emptyset$. \\
	We say that $p\in\partial \O$ is a \emph{$\lambda$-visible point for $\O$} if $p$ is $(\lambda,\epsilon)$-visible  for $\O$ for all $\epsilon\geq 0$. \end{defn}
We also say that $p$ is a \emph{weakly visible point for $\O$} if it is $1$-visible for $\O$ and $p$ is a \emph{visible point for $\O$} if it is $\lambda$-visible for $\O$ for all $\lambda\geq 1$. \\
\spc It is clear that if $p$ is a $\lambda$-visible point for $\O$ and $\lambda'\leq \lambda$ then $p$ is a $\lambda'$-visible point for $\O$. \\
\spc Let $\O\subset\Cn$ be any domain and $z,w,o\in \O$. We recall the definition of Gromov product with respect to Kobayashi distance $$(z|w)^\O_o:=\frac{1}{2}(k_\O(z,o)+k_\O(w,o)-k_\O(z,w)) . $$
\begin{defn}
	Let $\O\subset \Cn$, $o\in \O$, $p\in\partial \O$. We say that $p$ satisfies the \emph{Gromov property for $\O$} if for any $q\neq p$ in $\partial \O$, we have that $$ \limsup_{z\to p,w\to q}(z|w)^\O_o < \infty .$$  
	We say that $p$ satisfies the \emph{weak Gromov property for $\O$} if for any $o\in \O$ there exists a constant $c \leq 0$ such that for any $q\neq p$ in $\ov \O$ we have $$\liminf_{z\to p,w\to q} \left( k_\O(z,w)-k_\O(z,o) \right) \geq c  .$$
\end{defn}
It is clear that the Gromov property does not depend on the choice of the base point $o$, and
that if $p$ satisfies the Gromov property for $\O$, then it satisfies the weak Gromov property for $\O$. \\
\spc Being a $\lambda$-visible point leads to the following conditions about growth of Kobayashi distance. 
\begin{prop}\label{visibilityandgrowthofkobayashi} Let $\O\subset \Cn$ be a hyperbolic domain and $p\in\partial \O$.
	\begin{enumerate}
		\item Suppose that $p$ is a $\lambda$-visible point for $\O$. Then $p$ satisfies the Gromov property for $\O$.  
		\item Suppose that $p$ is a $\lambda$-visible point for $\O$. Then $\O$ is hyperbolic at $p$.
	\end{enumerate}
\end{prop}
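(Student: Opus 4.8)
The plan is to push everything down to the weakest notion in play: since $p$ is $\lambda$-visible it is in particular $1$-visible, hence $(1,\epsilon)$-visible for every $\epsilon\ge 0$, and I will only ever use visibility at the single fixed level $\epsilon=1$. Fix a base point $o\in\O$ and, for the competitor point $q$ (resp.\ for the point at infinity), fix a bounded neighbourhood $U$ of $p$; then $(1,1)$-visibility produces $V\subset\subset U$ and a \emph{fixed} compactum $K\subset\subset\O$ through which every $(1,1)$-geodesic running from $\O\cap V$ to $\O\setminus U$ must pass. The two soft facts I will lean on are: any curve of Kobayashi--Royden length at most $1$ is automatically a $(1,1)$-geodesic; and, $\O$ being hyperbolic, the Kobayashi distance induces the Euclidean topology on $\O$ (Barth), so that $k_\O(z_j,m)\to 0$ with $m\in\O$ forces $z_j\to m$ in $\Cn$.

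For (1), fix $q\ne p$ in $\partial\O$ and choose $U$ so small that $q\notin\ov U$. For $z$ near $p$ and $w$ near $q$ we then have $z\in\O\cap V$ and $w\in\O\setminus U$ once $z,w$ are close enough, and I pick a curve $\sigma$ joining them with $\lk_{\O}(\sigma)\le k_\O(z,w)+1$; by the remark following the definition of $\epsilon$-geodesics (the endpoint estimate propagates to every subarc) this $\sigma$ is a $(1,1)$-geodesic, so it meets $K$ at some point $m$. Splitting $\sigma$ at $m$ gives $k_\O(z,m)+k_\O(m,w)\le\lk_{\O}(\sigma)\le k_\O(z,w)+1$, and combining this with the triangle inequalities $k_\O(z,o)-k_\O(z,m)\le k_\O(m,o)$ and $k_\O(w,o)-k_\O(m,w)\le k_\O(m,o)$ yields
$$(z|w)^\O_o\le k_\O(m,o)+\tfrac12\le \sup_{x\in K}k_\O(x,o)+\tfrac12.$$
The right-hand side is finite and independent of $z,w$, so $\limsup_{z\to p,\,w\to q}(z|w)^\O_o<\infty$, which is the Gromov property.

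For (2) I may assume $\O$ unbounded (otherwise hyperbolicity at $p$ holds by definition) and argue by contradiction. If $\O$ were not hyperbolic at $p$, there would be $z_j\to p$ and $w_j\to\infty$ with $l_\O(z_j,w_j)\to 0$, hence $k_\O(z_j,w_j)\to 0$. I would then take curves $\sigma_j$ from $z_j$ to $w_j$ with $\lk_{\O}(\sigma_j)\le k_\O(z_j,w_j)+1/j\to 0$. For large $j$ these have length $\le 1$, so they are $(1,1)$-geodesics, and since $z_j\in\O\cap V$ while $w_j\in\O\setminus U$ (as $w_j\to\infty$ leaves the bounded set $U$), visibility forces each $\sigma_j$ through $K$ at some $m_j$. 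But $k_\O(z_j,m_j)\le\lk_{\O}(\sigma_j)\to 0$; passing to a subsequence with $m_j\to m_\infty\in K\subset\O$, we get $k_\O(z_j,m_\infty)\to 0$, whence $z_j\to m_\infty\in\O$ in $\Cn$, contradicting $z_j\to p\in\partial\O$.

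The delicate point — and the real crux of (2) — is precisely the last step: one must land \emph{arbitrarily} close to a \emph{single} fixed compactum, not merely within a bounded Kobayashi distance of it. Indeed, a naive attempt using only $k_\O(z_j,w_j)\to 0$ together with a positive visibility parameter $\epsilon$ places $z_j$ only within distance $\approx\epsilon$ of $K$, which would secretly demand that $\O$ be complete hyperbolic near $p$ (on an incomplete domain a sequence can stay at bounded Kobayashi distance from $K$ and still run out to $\partial\O$). What rescues the argument is that the \emph{definition} of hyperbolicity at $p$ is phrased through $l_\O\to 0$: this delivers connecting curves whose \emph{entire} length tends to $0$, so that at the single scale $\epsilon=1$ one already gets $k_\O(z_j,K)\to 0$, and Barth's theorem then closes the argument with no completeness hypothesis whatsoever.
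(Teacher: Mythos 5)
Your proof is correct and follows essentially the same route as the paper: part (1) is the standard Gromov-product estimate through the compactum $K$ (the argument of \cite[Proposition 2.4]{BNT}, which the paper simply cites), and part (2) is the contrapositive of the paper's direct observation that visibility forces $k_\O(\O\cap V,\O\setminus U)\geq k_\O(\O\cap V,K)-\epsilon>0$, both resting on the same fact that a fixed compactum has positive Kobayashi distance to points tending to $\partial\O$. One minor remark: your closing worry that the ``naive'' direct version would secretly require completeness is unfounded --- since every $(\lambda,\epsilon)$-geodesic is a $(\lambda,\epsilon')$-geodesic for $\epsilon'\geq\epsilon$, one may use the single compactum $K=K_{\lambda,1}$ for all $\epsilon\leq 1$ and then choose $\epsilon<\liminf_{z\to p}k_\O(z,K)$, a quantity that is positive by Barth's theorem alone.
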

\begin{proof}
	The first statement follows from the proof of \cite[Proposition 2.4]{BNT}.\\
	To see that the second statement holds, note that by assumption for any bounded neighbourhood $U$ of $p$, we can find another neighbourhood of $p$, $V\subset\subset U$ and compact set $K\subset\subset\O$, where any $\epsilon$-geodesic joining points in $\O\cap V$ and $\O\setminus U$ meets $K$. It is well known that on hyperbolic domains the distance of any compact set to the boundary is bounded below. Therefore by shrinking $V$ if necessary and taking $\epsilon$ small enough, we obtain $$ 
	k_{\O}(\O\cap V,\O \setminus U)\geq k_{\O}(\O\cap V,K) -\epsilon > 0  .
	$$
	It follows by Proposition \ref{notionsoflocalhyperbolicity} that $\O$ is hyperbolic at $p$. 
\end{proof}
It is worth noting that by the proof of \cite[Proposition 2.5]{BNT}, if $\O$ is complete hyperbolic, that is if $(\O,k_\O)$ is complete as a metric space, then $p$ satisfies the Gromov property if and only if it is a weakly visible point. Moreover, statement (1) in Proposition \ref{visibilityandgrowthofkobayashi} hold even when the domain is not hyperbolic. 
\begin{defn}
	Let $\Omega, D$ be two hyperbolic domains in $\Cn$ which have a common boundary point $p$, and assume that both $\O$ and $D$ are hyperbolic at $p$. We say that \emph{$\Omega$ is equivalent to $D$ at $p$} if there exists a bounded neighbourhood $U\subset\Cn$ of $p$ such that $U\cap \Omega= U \cap D$.  
\end{defn}
It is easy to see the definition above indeed is an equivalence relation. Denote the equivalence class of $\Omega$ at $p$ by $[\Omega]_p$. In some sense, these equivalence classes can be considered as ``germs" of domains at points in $\Cn$. \\
\spc We recall the definition of visible pairs. Let $\O\subset \Cn$, $p\neq q \in \partial \O$. $\{p,q\}$ is said to be a \emph{visible pair} if there exists neighbourhoods $U_p$, $U_q$ of $p$ and $q$ respectively such that if $\gamma:I\rightarrow \O$ is a $(\lambda,\epsilon)$-geodesic joining a point in $U_p$ to point in $U_q$, then $\gamma(I)$ intersects a compact set $K_{\lambda,\epsilon}\subset\subset\O$. $\O$ enjoys the \emph{visibility property} if for any $p\neq q\in \partial \O$ we have that $\{p,q\}$ is a visible pair. \\
\spc With the same spirit, one can define $\lambda$-visible pairs and $\lambda$-visibility property if one restricts the definition above to $(\lambda',\epsilon)$-geodesics where $\lambda'\leq \lambda$. 
\begin{prop}\label{ourdefinitionandolderdefinition} Let $\O\subset \Cn$, $\lambda\geq 1$. A point $p\in\partial \O$ is a $\lambda$-visible  for $\O$ if and only if for any $q\in \partial{\O}$ such that $q\neq p$, $\{p,q\}$ is a  $\lambda$-visible  pair. Consequently, $\O$ enjoys $\lambda$-visibility property if and only if any $p\in \partial \O$ is a $\lambda$-visible point for $\O$.
\end{prop}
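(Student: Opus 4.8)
The plan is to prove the two implications separately and then read off the statement about the $\lambda$-visibility property. Two elementary remarks will be used repeatedly: the restriction of a $(\lambda,\epsilon)$-geodesic to a subinterval of its domain is again a $(\lambda,\epsilon)$-geodesic (the defining inequality is quantified over all pairs $t_1\le t_2$), and every $(\lambda',\epsilon)$-geodesic with $\lambda'\le\lambda$ is in particular a $(\lambda,\epsilon)$-geodesic, so that testing the pair condition against all $\lambda'\le\lambda$ amounts to testing it against $(\lambda,\epsilon)$-geodesics.

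For the implication ``visible point $\Rightarrow$ visible pairs'', fix $q\in\partial\O$ with $q\neq p$. I would choose a bounded neighbourhood $U$ of $p$ small enough that $q\notin\overline U$, and apply $\lambda$-visibility of $p$ to this $U$: for each $\epsilon\ge 0$ it yields $V\subset\subset U$ and a compactum $K_{\lambda,\epsilon}\subset\subset\O$. Setting $U_p:=V$ and letting $U_q$ be any neighbourhood of $q$ disjoint from $U$, any $(\lambda,\epsilon)$-geodesic joining $\O\cap U_p=\O\cap V$ to $\O\cap U_q\subset\O\setminus U$ is exactly a curve of the kind appearing in the definition of a visible point, so it meets $K_{\lambda,\epsilon}$. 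Hence $\{p,q\}$ is a $\lambda$-visible pair.

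The substantial direction is ``visible pairs $\Rightarrow$ visible point'', which I would argue by contradiction together with a compactness argument. Assume $\{p,q\}$ is a $\lambda$-visible pair for every $q\neq p$ but $p$ fails to be $\lambda$-visible. Then there are $\epsilon\ge 0$ and a bounded neighbourhood $U$ of $p$ such that for every $V\subset\subset U$ and every compactum $K$ there is a $(\lambda,\epsilon)$-geodesic from $\O\cap V$ to $\O\setminus U$ whose image misses $K$. I would then take neighbourhoods $V_j\subset\subset U$ of $p$ shrinking to $\{p\}$ and the exhaustion $K_j:=\{z\in\O:|z|\le j,\ \mathrm{dist}(z,\partial\O)\ge 1/j\}$, obtaining $(\lambda,\epsilon)$-geodesics $\gamma_j$ from $a_j\in\O\cap V_j$ to $b_j\in\O\setminus U$ whose images miss $K_j$; note $a_j\to p$. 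Since $a_j\in U$ and $b_j\notin U$, each $\gamma_j$ has a first exit point $c_j\in\partial U$ from $U$, and $\partial U$ is compact because $U$ is bounded; passing to a subsequence, $c_j\to c\in\partial U$, whence $c\neq p$ as $p$ is interior to $U$. The points $c_j$ lie on $\gamma_j$, hence avoid $K_j$, and a direct estimate with the explicit $K_j$ shows $c$ cannot be an interior point of $\O$, so $c\in\partial\O\setminus\{p\}$.

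To close the argument I would invoke the hypothesis that $\{p,c\}$ is a $\lambda$-visible pair: there are neighbourhoods $U_p,U_c$ of $p,c$ and, for our $\epsilon$, a compactum $K'\subset\subset\O$ such that every $(\lambda,\epsilon)$-geodesic joining $U_p$ to $U_c$ meets $K'$. For all large $j$ one has $a_j\in U_p$ and $c_j\in U_c$, so the subcurve of $\gamma_j$ running from $a_j$ to $c_j$ --- still a $(\lambda,\epsilon)$-geodesic --- meets $K'$; but $K'\subset K_j$ for large $j$, contradicting that the image of $\gamma_j$ misses $K_j$. This proves $p$ is $\lambda$-visible. Finally, $\O$ enjoys the $\lambda$-visibility property exactly when $\{p,q\}$ is a $\lambda$-visible pair for all $p\neq q$, which by the equivalence just established holds precisely when every boundary point is a $\lambda$-visible point. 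I expect the main obstacle to be this second implication: in the definition of a visible point the far endpoint of the tested geodesic is only required to lie in $\O\setminus U$ and is a priori not localized near any single boundary point, and the compactness of $\partial U$ is what lets me localize the first exit point near some $q\neq p$ and thereby reduce to the pairwise hypothesis.
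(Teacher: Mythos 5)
Your proof is correct and follows essentially the same route as the paper: both arguments truncate the offending $(\lambda,\epsilon)$-geodesics at their first exit from a bounded neighbourhood $U$ of $p$, use compactness of $\partial U$ and the fact that the curves escape every compactum to produce a limit point $r\in\partial\O$ with $r\neq p$, and then contradict the $\lambda$-visibility of the pair $\{p,r\}$. The easy direction and the concluding remark about the $\lambda$-visibility property are handled the same way in both.
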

\begin{proof}
	It is clear that being visible at $p$ is a stronger assumption than any $\{p,q\}$ being visible pairs. So, we will prove the converse. \\
	We suppose that $p$ is not a visible point for $\O$. Then, there exists a bounded neighbourhood $U$ of $p$, and sequences $z_n\rightarrow p$, $w_n\in \O\setminus U$ and a sequence of $(\lambda,\epsilon)$-geodesics $\gamma_n:I_n\rightarrow \O$ joining $z_n$ to $w_n$ which eventually avoid any compact set in $\O$. By connectivity of geodesics, each $\gamma_n$ must meet $(\partial U)\cap \O$. Set $\tau_n:=\inf\{t\in I_n:\gamma_n(t)\in(\partial U)\cap \O\}$ and $w'_n=\gamma_n(\tau_n)$. By construction, $\gamma_n |_{[0,\tau_n]}$ are $(\lambda,\epsilon)$-geodesics for $\O$ joining $z_n$ to $w'_n$ which tend to $\partial \O$. Moreover, by our assumption passing to a subsequence if necessary we have $w'_n\rightarrow r\in\partial \O \cap \ov{U}$. This shows that $\{p,r\}\subset\partial\O$ is not a visible pair. 
\end{proof}

Proposition \ref{ourdefinitionandolderdefinition} shows, in some sense, that visibility for a domain is a local property of the boundary points. But the (almost) geodesics involved in the definition are constructed using the 
global metric; we will remove this restriction later, to give conditions depending only on the equivalence
class of $\O$ at each boundary point. 

\section{Localization of Kobayashi distance near visible points}
\label{secloc}
Let $\O$ be a domain in $\Cn$, $p\in\partial \O$. We say that $k_\O$ satisfies \emph{additive (resp. multiplicative) localization at $p$} if there exists neighbourhoods $V\subset\subset U$ of $p$, and $C\geq 0$ such that 
$$
k_{\O\cap U}(z,w)\leq k_\O (z,w)+ C\mbox{, resp. }
k_{\O\cap U}(z,w)\leq C k_\O (z,w)
$$
for any $z,w\in\O\cap V$. We would like to note that whenever we discuss such a property we will assume that $z,w\in \O\cap V$ belong to the same component of $\O\cap U$. \\
\spc This section is devoted to proving additive and multiplicative localization results near visible points. Although our results are given with weaker assumptions, they are of the same character as those  in \cite{S}. This is essentially due to Proposition \ref{ourdefinitionandolderdefinition}, which relates the notion of visible points to the notion of visible pairs. Even though our proofs closely follow \cite{S}, we discuss the arguments in detail to show the local nature, and also to simplify some steps of proofs given in \cite{S}. \\
\spc We are ready to introduce the following theorem, which is an analogue of \cite[Theorem 0.1, Theorem 0.2]{S}.
\begin{thm}\label{localizationofkobayashidistance}
	Let $\O\subset \Cn$ be a hyperbolic domain, $p\in\partial \Omega$ and $\lambda\geq 1$. Assume that one or both of the followings hold. 
	\begin{enumerate}
		\item $p$ is a $\lambda$-visible point for $\Omega$. 
		\item There exists a bounded neighbourhood of $p$, $U \subset \Cn$ such that $p$ is a $\lambda$-visible point for $\O\cap U$ and $\O$ is hyperbolic at $p$. 
	\end{enumerate}
	Then, there exists another neighbourhood $V\subset\subset U$ of $p$, and a constant $C\geq 0$ such that for any $z,w\in\O\cap V$ we have
	$$ k_{\O\cap U}(z,w)\leq k_\O(z,w)+C .$$
\end{thm}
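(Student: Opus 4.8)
The plan is to prove the additive localization inequality
$$k_{\O\cap U}(z,w)\leq k_\O(z,w)+C$$
for $z,w\in\O\cap V$ by a contradiction argument that exploits $\lambda$-visibility. Suppose no such constant exists: then there are sequences $z_n,w_n\in\O\cap V$ with
$$k_{\O\cap U}(z_n,w_n)-k_\O(z_n,w_n)\to+\infty.$$
For each $n$ I would pick an $\epsilon$-geodesic $\gamma_n$ for $\O$ (with $\epsilon$ fixed and small, say $\epsilon=1$) joining $z_n$ to $w_n$, which exists since $k_\O$ is the integrated Kobayashi--Royden metric. The idea is that if all such geodesics stayed inside $\O\cap U$, then their $\O\cap U$-length would control $k_{\O\cap U}(z_n,w_n)$ up to the small additive $\epsilon$, contradicting the divergence. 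So $\gamma_n$ must exit $U$; this is where visibility enters.

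First I would reduce to the case where the geodesic $\gamma_n$ leaves $U$. Let me choose nested neighbourhoods $V\subset\subset V_1\subset\subset U$ of $p$. If $\gamma_n$ stays in $\O\cap U$ we are done for that $n$, so assume it exits. The geodesic then has to cross the annular region between $V$ and $\partial U$. The key observation is that $\lambda$-visibility at $p$ (for $\O$ in case (1), or for $\O\cap U$ in case (2)) guarantees that any $(\lambda,\epsilon)$-geodesic joining $\O\cap V$ to $\O\setminus U$ meets a fixed compactum $K=K_{\lambda,\epsilon}\subset\subset\O$. I would use this to split $\gamma_n$ at its first and last passage through $K$ (or through $\partial V_1$), producing subarcs whose endpoints lie on a compact set and hence whose contributions are uniformly bounded. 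Concretely, let $s_n$ be the first time $\gamma_n(s_n)\in K$ and $t_n$ the last such time; the middle portion $\gamma_n|_{[s_n,t_n]}$ has endpoints in the compact $K$, so by continuity of $k_{\O\cap U}$ (finite on compacta of $\O\cap U$, noting $K\subset\subset\O\cap U$ after possibly enlarging $U$ or taking $K$ inside $U$) its $\O\cap U$-distance is bounded by some $M=\operatorname{diam}_{k_{\O\cap U}}(K)<\infty$.

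The estimate I would then assemble is
$$k_{\O\cap U}(z_n,w_n)\leq k_{\O\cap U}(z_n,\gamma_n(s_n))+k_{\O\cap U}(\gamma_n(s_n),\gamma_n(t_n))+k_{\O\cap U}(\gamma_n(t_n),w_n).$$
Wait — $w_n\in\O\cap V$, so both endpoints are inside, but the middle of $\gamma_n$ may leave $U$; the point of cutting at the times when $\gamma_n$ meets $K$ is that the outer pieces $\gamma_n|_{[0,s_n]}$ and $\gamma_n|_{[t_n,\,\mathrm{end}]}$ join an interior point (in $\O\cap V$) to a point of $K$, and I need these to stay in $\O\cap U$. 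To guarantee this, I would instead cut at the times $\gamma_n$ first leaves and last re-enters a slightly larger neighbourhood $V_1$, and combine with visibility to say that between successive boundary crossings the geodesic must pass through $K$, so the length of each excursion outside $U$ is comparable to a Kobayashi distance within $\O$ between points of $K$. Summing the bounded pieces yields $k_{\O\cap U}(z_n,w_n)\leq k_\O(z_n,w_n)+C$ with $C$ depending only on $\epsilon$, $M$, and the geometry of $K,V,U$.

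The main obstacle I anticipate is controlling the parts of $\gamma_n$ that lie outside $U$ and relating their $\O$-length to the $\O\cap U$-distance of their endpoints: a priori $k_{\O\cap U}$ can be much larger than $k_\O$ precisely because paths in $\O\cap U$ cannot use the excursions outside $U$. Visibility resolves this by forcing the geodesic through the fixed compactum $K$, so that the ``bad'' excursions are pinned between points of $K$ whose mutual $\O\cap U$-distance is bounded; the delicate bookkeeping is to ensure the number of such excursions can be taken to be one (by choosing $s_n,t_n$ as the first and last times in $K$) and that the resulting endpoints lie in a component of $\O\cap U$ for which $k_{\O\cap U}$ is finite. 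In case (2) I would additionally invoke that $\O$ is hyperbolic at $p$, via Proposition~\ref{notionsoflocalhyperbolicity}, to ensure the geodesic cannot degenerate to the boundary inside $U$ before reaching $K$, which is what makes the cutting times well defined and the endpoint distances uniformly controlled.
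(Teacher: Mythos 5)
Your overall skeleton (argue by contradiction, cut the almost-geodesic, use the visibility compactum to pin the excursion between points whose mutual $k_{\O\cap U}$-distance is bounded) matches the paper's strategy, but the proposal omits the analytic ingredient on which everything rests: an additive comparison of Kobayashi--Royden \emph{lengths}, $\lk_{\O\cap U}(\gamma)\leq \lk_\O(\gamma)+C$, valid uniformly for $(\lambda,\epsilon)$-geodesics staying in a small neighbourhood $V$ of $p$ (Lemma~\ref{localizationoflengths} in the paper). You write that if the geodesic stays inside $\O\cap U$ then ``their $\O\cap U$-length would control $k_{\O\cap U}(z_n,w_n)$ up to the small additive $\epsilon$''; the inequality $k_{\O\cap U}(z_n,w_n)\leq \lk_{\O\cap U}(\gamma_n)$ is indeed free, but to reach a contradiction you must bound $\lk_{\O\cap U}(\gamma_n)$ by $k_\O(z_n,w_n)+C$, and since $\k_{\O\cap U}\geq\k_\O$ pointwise with no a priori upper bound, this is precisely the hard step. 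The paper gets it from Sarkar's infinitesimal estimate $\k_{\O\cap U}(x;v)\leq(1+Ce^{-k_\O(x,\O\setminus U)})\k_\O(x;v)$ combined with the weak Gromov property of $p$ (a consequence of $\lambda$-visibility) to show the error integral $\int_I e^{-k_\O(\gamma(t),\O\setminus U)}\k_\O(\gamma(t);\gamma'(t))\,dt$ is uniformly bounded along reparametrized almost-geodesics. Without this lemma, even the ``easy'' case where $\gamma_n$ never leaves $V$ is not handled, and the outer pieces of your cut geodesic (from $z_n$ to the cut point) cannot have their $\O\cap U$-length estimated either.

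Two further points. First, your plan to cut at the first and last visits to the visibility compactum $K$ founders on the fact that $K$ is merely a compact subset of $\O$ with no reason to lie in $U$, and ``enlarging $U$'' is not available (in case (2) the hypothesis is tied to the given $U$); the paper instead cuts at the first exit from and last return to $V$, so the outer arcs automatically lie in $\ov V\subset\subset U$, and only then invokes visibility to produce points $\tilde z_n,\tilde w_n$ of $K$ \emph{on those arcs}, hence in $K\cap\ov V\subset\subset\O\cap U$. Second, the step you defer as ``delicate bookkeeping'' --- relating the $\O$-length of the middle excursion to $k_{\O\cap U}$ of its endpoints --- is resolved in the paper not by estimating the excursion at all, but by the dichotomy: either $\{z'_n,w'_n\}$ stays in a compactum of $\O\cap U$ (then $k_{\O\cap U}(z'_n,w'_n)$ is bounded and can be inserted by the triangle inequality), or it degenerates to $\partial\O$ and visibility supplies the substitute points on $K$. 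In case (2) the paper bypasses visibility of $\O$ altogether and uses the weak Gromov property of $p$ for $\O\cap U$ directly on \eqref{lastlaststepreally}. As it stands, the proposal identifies the right landmarks but leaves the central estimate unproved, so it does not constitute a proof.
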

In order to prove Theorem \ref{localizationofkobayashidistance} we will provide a localization result about Kobayashi-Royden lengths. Our result is an analogue of \cite[Lemma 0.8, Lemma 0.9]{S}.
\begin{lem}\label{localizationoflengths}
	Let $\O\subset \Cn$ be a hyperbolic domain and $p\in\partial \O$.
	\begin{enumerate}
		\item If $p$ is a $\lambda$-visible point for $\Omega$ for some $\lambda\geq 1$, then for any bounded neighbourhood $U$ of $p$ there exists $V\subset\subset U$ and a constant $C \geq 0$ such that for all $(\lambda,\epsilon)$-geodesics $\gamma:I\rightarrow\O\cap V$ for $\O$ we have $$\lk_{\Omega\cap U}(\gamma)\leq\lk_\Omega(\gamma)+C.$$
		\item If there exists a bounded neighbourhood of $p$, $U \subset \Cn$ and $\lambda\geq 1$ such that $p$ is a $\lambda$-visible point for $\O\cap U$ and $\O$ is hyperbolic at $p$, then there exists a neighbourhood $V\subset\subset U$ and a constant $C\geq 0$ such that for all $(\lambda,\epsilon)$-geodesics $\gamma:I\rightarrow\O\cap V$ for $\O$ we have $$\lk_{\Omega\cap U}(\gamma)\leq\lk_\Omega(\gamma)+C .$$
	\end{enumerate}
\end{lem}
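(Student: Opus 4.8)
The plan is to prove Lemma~\ref{localizationoflengths} by exploiting the fact that a $(\lambda,\epsilon)$-geodesic $\gamma:I\to\O\cap V$ landing near $p$ but remaining inside a small neighbourhood can be cut into controlled pieces, on each of which the Royden localization inequality \eqref{roydenslocalizationlemma} gives a comparison between $\k_\O$ and $\k_{\O\cap U}$. The key observation is that, by \eqref{roydenslocalizationlemma} with $D=U$, we have $\k_{\O\cap U}(z;v)\le \k_\O(z;v)/\tilde l_\O(z,\O\setminus U)$, so the integrand ratio $\k_{\O\cap U}(\gamma(t);\gamma'(t))/\k_\O(\gamma(t);\gamma'(t))$ degenerates only when $\gamma(t)$ is close (in Lempert function) to $\O\setminus U$, i.e.\ only near the ``far'' boundary $\partial U\cap\O$. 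So the whole difficulty is to show the geodesic spends only a bounded amount of Royden length in the bad region where this ratio is large.

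First I would fix a bounded neighbourhood $U$ and, using $\lambda$-visibility of $p$ (for $\O$ in case (1), for $\O\cap U$ in case (2)), extract the neighbourhood $V\subset\subset U$ and the compactum $K_{\lambda,\epsilon}$ guaranteed by the definition: every $(\lambda,\epsilon)$-geodesic joining $\O\cap V$ to $\O\setminus U$ must meet $K_{\lambda,\epsilon}$. Since $\gamma$ stays in $\O\cap V$, it never reaches $\O\setminus U$, but I would interpose an intermediate neighbourhood $W$ with $V\subset\subset W\subset\subset U$ and split $I$ into the subset $I_{\mathrm{in}}$ where $\gamma(t)\in W$ and the complementary subset $I_{\mathrm{out}}$ where $\gamma(t)\in(\O\cap U)\setminus W$. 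On $I_{\mathrm{out}}$ the point $\gamma(t)$ stays a definite Euclidean distance from $\O\setminus U$, so $\tilde l_\O(\gamma(t),\O\setminus U)$ is bounded below by a positive constant and \eqref{roydenslocalizationlemma} yields a pointwise multiplicative bound $\k_{\O\cap U}\le C'\k_\O$ there; hence that part's contribution to $\lk_{\O\cap U}(\gamma)-\lk_\O(\gamma)$ is controlled by $(C'-1)\lk_\O(\gamma|_{I_{\mathrm{out}}})$, which I must still bound.

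The main obstacle, and the heart of the argument following \cite{S}, is bounding the Royden length that $\gamma$ can accumulate in the region close to $p$, where the multiplicative comparison degenerates. Here I would use that $\O$ is hyperbolic at $p$: by Proposition~\ref{notionsoflocalhyperbolicity} (invoked directly in case (2), and supplied by Proposition~\ref{visibilityandgrowthofkobayashi}(2) in case (1)), there is a positive lower bound $k_\O(\O\cap V',\O\setminus U')>0$ for suitable nested neighbourhoods, equivalently a positive lower bound on $\k_\O$ away from the boundary that prevents a $(\lambda,\epsilon)$-geodesic from oscillating indefinitely between deep and shallow regions near $p$. Concretely, I would show that the number of times $\gamma$ can cross between $V'$ and the complement of a slightly larger $U'$ is finite and uniformly bounded, because each full excursion costs a definite amount of $k_\O$-length while the $(\lambda,\epsilon)$-geodesic condition caps total length by $\lambda k_\O(\text{endpoints})+\epsilon$; combining this with visibility (each boundary-approaching sub-geodesic must hit $K_{\lambda,\epsilon}$, contradicting it staying near the boundary) bounds $\lk_\O(\gamma|_{I_{\mathrm{in}}})$ by a constant depending only on $\lambda,\epsilon$ and the neighbourhoods.

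Finally I would assemble the two estimates: the contribution from $I_{\mathrm{out}}$ is at most $(C'-1)\lk_\O(\gamma)$ plus a bounded term, but to get the purely additive conclusion I would instead absorb the multiplicative factor by noting $\lk_{\O\cap U}(\gamma)=\lk_{\O\cap U}(\gamma|_{I_{\mathrm{in}}})+\lk_{\O\cap U}(\gamma|_{I_{\mathrm{out}}})$ and that on $I_{\mathrm{out}}$ the comparison constant is exactly $1$ up to the bad set, so the only genuine excess comes from the bounded-length region $I_{\mathrm{in}}$ near $p$, where even the worst pointwise ratio integrates against a bounded Royden length to give a finite constant $C$. This yields $\lk_{\O\cap U}(\gamma)\le\lk_\O(\gamma)+C$ with $C$ independent of the particular geodesic, proving both (1) and (2); the case (2) argument is identical except that $\lambda$-visibility is applied in the localized domain $\O\cap U$ and the hyperbolicity of $\O$ at $p$ is used to control the global length, which is exactly where the two hypotheses diverge.
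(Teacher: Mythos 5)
Your strategy---split $I$ into a region where the Royden comparison is close to $1$ and a ``bad'' region of bounded length---does not survive inspection. First, the decomposition is vacuous: the hypothesis is $\gamma(I)\subset\O\cap V$ with $V\subset\subset U$, so once you interpose $W$ with $V\subset\subset W\subset\subset U$ the set $I_{\mathrm{out}}=\{t:\gamma(t)\notin W\}$ is empty. Second, you have the geometry of the ratio inverted. By \eqref{roydenslocalizationlemma} the pointwise ratio is $1/\tilde l_\O(\gamma(t),\O\setminus U)$, and since $\tilde l_\O=\tanh l_\O$ this is close to $1$ precisely where $k_\O(\gamma(t),\O\setminus U)$ is \emph{large}, i.e.\ where $\gamma(t)$ is Kobayashi-far from $\O\setminus U$ --- which for points of $\O\cap V$ means close to $\partial\O$ near $p$, not away from it. On the complementary region, where $k_\O(\gamma(t),\O\setminus U)$ stays moderate, the ratio is only bounded away from $1$ (note also that a Euclidean lower bound on the distance to $\O\setminus U$ does not give a lower bound on $\tilde l_\O(\cdot,\O\setminus U)$; that is exactly what hyperbolicity at $p$ provides), and a uniform multiplicative bound $\k_{\O\cap U}\le C'\k_\O$ can never yield an additive estimate, because $\lk_\O(\gamma)\approx k_\O(z,w)$ is unbounded as the endpoints $z,w\in\O\cap V$ approach $\partial\O$. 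For the same reason your ``uniformly bounded number of excursions'' count fails: the cap $\lambda k_\O(z,w)+\epsilon$ on the total length is itself unbounded. Finally, the compactum $K_{\lambda,\epsilon}$ is never directly applicable here, since $\gamma$ stays in $\O\cap V$ and never joins $\O\cap V$ to $\O\setminus U$.

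The missing idea is quantitative: one needs the ratio to tend to $1$ at a rate that is integrable along the curve. The paper obtains this from \cite[Lemma 0.3]{S} (inequality \eqref{sarkarsestimate}), $\k_{\O\cap U}(x;v)\le(1+Ce^{-k_\O(x,\O\setminus U)})\k_\O(x;v)$, which reduces the problem to bounding $E=\int_I e^{-k_\O(\gamma(t),\O\setminus U)}\k_\O(\gamma(t);\gamma'(t))\,dt$. Visibility enters only through Proposition \ref{visibilityandgrowthofkobayashi} and the weak Gromov property: for a fixed base point $o$ one has $k_\O(\gamma(t),\O\setminus U)\ge k_\O(\gamma(t),o)+c$, and after reparametrizing $\gamma$ by $\k_\O$-arclength the Gromov-product inequality at the parameter $t_0$ minimizing $k_\O(\gamma(\cdot),o)$ gives $k_\O(\gamma(t),o)\ge\tfrac12\bigl(|t-t_0|/\lambda-\epsilon\bigr)$, whence $E\le C''\int_{\mathbb R}e^{-|t-t_0|/(2\lambda)}\,dt<\infty$ uniformly in $\gamma$. (In case (2) one additionally compares $k_\O(\gamma(t),\O\setminus W)$ with $k_{\O\cap U}(\gamma(t),\O\setminus W)$ so as to invoke the weak Gromov property of $p$ for $\O\cap U$.) Without the exponential decay in \eqref{sarkarsestimate} and the linear growth of $t\mapsto k_\O(\gamma(t),o)$ along an almost-geodesic, your argument has no mechanism to bound the accumulated excess, so the proposal does not close.
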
 
Proof of Lemma \ref{localizationoflengths} requires the reparametrization of $\lambda$-geodesics in the sense of almost-geodesics. As noted earlier, this is possible on hyperbolic domains. As the later results depend on Lemma \ref{localizationoflengths} we see that hyperbolicity assumption is essential. \\ 
We will need the following crucial estimate given in \cite{S}. It follows from Royden's localization lemma, by comparing the Kobayashi pseudodistance to the Lempert function and using the inequality $\tanh(x)\geq 1- e^{-x}$ for all $x\geq 0$.  
\begin{lem}\cite[Lemma 0.3]{S}
	Let $\O\subset \Cn$ be a hyperbolic domain and $V\subset\subset U$ be neighbourhoods of $p\in\partial \Omega$ such that $$ k_\O(\O\cap V,\O\setminus U) > 0.$$ 
	Then there exists $C>0$ such that the infinitesimal Kobayashi metric satisfies  
	\begin{equation}\label{sarkarsestimate}
		\k_{\Omega\cap U}(x;v)\leq (1+C e^{-k_\O(x,\Omega\setminus U)})\k_\Omega(x;v)
	\end{equation} 
	for all $x\in \O\cap V$, $v\in \Cn$. 
\end{lem}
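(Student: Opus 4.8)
The plan is to derive \eqref{sarkarsestimate} directly from Royden's localization lemma by converting the Lempert-function factor appearing there into the claimed exponential form. First I would apply \eqref{roydenslocalizationlemma} with the subdomain $D$ taken to be $U$, which gives
$$\tilde{l}_\O(x,\O\setminus U)\,\k_{\O\cap U}(x;v)\leq \k_\O(x;v)$$
for all $x\in\O\cap V$ and $v\in\Cn$. Since the hypothesis $k_\O(\O\cap V,\O\setminus U)>0$ forces in particular $\tilde{l}_\O(x,\O\setminus U)>0$ for every $x\in\O\cap V$, I may divide to obtain
$$\k_{\O\cap U}(x;v)\leq \frac{1}{\tilde{l}_\O(x,\O\setminus U)}\,\k_\O(x;v).$$

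The core of the argument is then to show that $1/\tilde{l}_\O(x,\O\setminus U)\leq 1+Ce^{-k_\O(x,\O\setminus U)}$ for a suitable $C$. I would first record that, because $\tanh$ is increasing and $l_\O=\tanh^{-1}\tilde{l}_\O$, one has $\tilde{l}_\O(x,\O\setminus U)=\tanh\bigl(l_\O(x,\O\setminus U)\bigr)$. Applying the elementary inequality $\tanh(t)\geq 1-e^{-t}$ for $t\geq 0$, together with the fact that $k_\O\leq l_\O$ (so $l_\O(x,\O\setminus U)\geq k_\O(x,\O\setminus U)$), yields
$$\tilde{l}_\O(x,\O\setminus U)\geq 1-e^{-l_\O(x,\O\setminus U)}\geq 1-e^{-k_\O(x,\O\setminus U)}.$$

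Finally I would invoke the standing positivity hypothesis. Setting $\d:=k_\O(\O\cap V,\O\setminus U)>0$, every $x\in\O\cap V$ satisfies $k_\O(x,\O\setminus U)\geq \d$, so $t:=e^{-k_\O(x,\O\setminus U)}\leq e^{-\d}<1$ stays uniformly bounded away from the singularity of $s\mapsto 1/(1-s)$ at $s=1$. On the interval $[0,e^{-\d}]$ the elementary bound $1/(1-t)\leq 1+Ct$ holds with $C:=(1-e^{-\d})^{-1}$, and chaining this with the two previous displays gives $\k_{\O\cap U}(x;v)\leq (1+Ce^{-k_\O(x,\O\setminus U)})\k_\O(x;v)$, which is \eqref{sarkarsestimate} with this constant. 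The only place demanding genuine care is this last reciprocal estimate: it is precisely the hypothesis $k_\O(\O\cap V,\O\setminus U)>0$ that keeps $e^{-k_\O(x,\O\setminus U)}$ away from $1$ and thus keeps $C$ finite, since without such a uniform lower bound on the distance to $\O\setminus U$ the factor $1/\tilde{l}_\O(x,\O\setminus U)$ could blow up as $x$ approaches $\partial U\cap\O$.
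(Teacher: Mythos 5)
Your argument is correct and follows exactly the route the paper indicates for this lemma: Royden's localization lemma with $D=U$, the comparison $k_\O\leq l_\O$ together with $\tilde{l}_\O=\tanh(l_\O)$ and the elementary bound $\tanh(t)\geq 1-e^{-t}$, and finally the uniform reciprocal estimate made possible by $k_\O(\O\cap V,\O\setminus U)>0$. Your constant $C=(1-e^{-\d})^{-1}$ with $\d=k_\O(\O\cap V,\O\setminus U)$ differs from the value $\coth(\d)$ quoted from Sarkar, but the lemma only asserts existence of some $C>0$, so this is immaterial.
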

It follows by \cite{S} that $C$ above can be taken to be $\coth(k_\O(\O\cap V,\O\setminus U))$. \\  
\spc \textit{Proof of Lemma \ref{localizationoflengths}.}
Let $U$ be as in Lemma \ref{localizationoflengths}. Proposition \ref{notionsoflocalhyperbolicity} and our assumption in (2) of Lemma \ref{localizationoflengths} imply that on both cases, there exists a neighbourhood $V\subset\subset U$ of $p$ such that $$ k_\O(\O\cap V,\O\setminus U) > 0 .$$ Let $V$ be such a set, assume that $z,w\in \Omega\cap V$ and $\gamma: I \rightarrow \Omega\cap V$ is a $(\lambda,\epsilon)$-geodesic for $\O$ joining them.
By a direct calculation, using \eqref{sarkarsestimate} we see that 
$$
\lk_{\Omega\cap U}(\gamma)=\int_{I} \k_{\Omega\cap U}(\gamma(t);\gamma'(t))dt \leq 
\int_{I} (1+C e^{-k_\O(\gamma(t),\Omega\setminus U)})\k_\Omega(\gamma(t);\gamma'(t))dt $$
\begin{equation}\label{estimatingthelength}
	\leq \lk_\Omega(\gamma)+C\int_{I}  e^{-k_\O(\gamma(t),\Omega\setminus U)}\k_\Omega(\gamma(t);\gamma'(t))dt
\end{equation} 
Set $E:= \int_{I}  e^{-k_\O(\gamma(t),\Omega\setminus U)}\k_\Omega(\gamma(t);\gamma'(t))dt$. We will show that $E$ is uniformly bounded above for any such curve. \\
\emph{We first assume that $p$ is a $\lambda$-visible point for $\O$.} \\ 
By Proposition \ref{visibilityandgrowthofkobayashi}, $p$ satisfies the weak Gromov property for $\O$. 
Notice that this property implies that for any bounded neighbourhood $U$ of $p$ we can find another neighbourhood $V\subset\subset U$ such that for a given $o\in\O$, we have a constant $c\leq 0$ such that 
$$k_\O(z,w)\geq k_\O(z,o)+c$$ for all $z\in \O\cap V$, $w\in \O\setminus U$. \\ In particular, shrinking $V$ if necessary and fixing $o\in \O\cap V$ we can find a constant $c\leq 0$ satisfying
$$k_\O(\gamma(t),\Omega\setminus U):=\inf_{w\in \Omega\setminus U} k_\O (\gamma(t),w) \geq k_\O (\gamma(t),o)+c $$  for all $t\in I$ .
Thus, we obtain \begin{equation}\label{boundingtheerror}E\leq C'\int_{I}  e^{-k_\O(\gamma(t),o)}\k_\Omega(\gamma(t);\gamma'(t))dt.
\end{equation}
We reparametrize $\gamma$ so that it becomes an $(1,\epsilon)$-almost geodesic
for $\k_\O$, i.e. $\k_\O(\gamma(t);\gamma'(t))=1$ almost everywhere. 
Choose $t_0\in I$ with $k_\O(\gamma(t_0),o))\leq k_\O(\gamma(t),o))$ for all $t\in I$. Thus
$$ k_\O(\gamma(t),o)\geq \dfrac{1}{2} \left( k_\O(\gamma(t_0,o)+k_\O (t,o)) \right) \geq \dfrac{1}{2} k_\O (\gamma(t_0),\gamma(t)) \geq \dfrac{1}{2}\left(\frac{|t-t_0|}{\lambda}-\epsilon\right).$$

Using this fact again, by \eqref{boundingtheerror} we obtain 
$$ 
E\leq   C''\int_{I} e^{-\frac{|t-t_0|}{2\lambda}} \k_\Omega(\gamma(t);\gamma'(t))dt \leq 2 C'' \int_{\mathbb{R}} e^{-\frac{t}{2\lambda}}dt \leq C''' $$ so by \eqref{estimatingthelength} we have $$\lk_{\Omega\cap U}(\gamma)\leq \lk_\O (\gamma)+C'''.
$$ 
This finishes the case where $p$ is $\lambda$-visible for $\O$. 
\\
\emph{Now we assume $p$ is $\lambda$-visible for $\O\cap U$ and $\O$ is hyperbolic at $p$.} \\ By the hyperbolicity assumption at $p$, choosing $V$ as before we deduce that \eqref{estimatingthelength} still holds and we want to show that $E:= \int_{I} e^{-k_\O(\gamma(t),\Omega\setminus U)}\k_\Omega(\gamma(t);\gamma'(t))dt$ is uniformly bounded. \\
To do so, we take a $W\subset\subset U$ satisfying $V\subset\subset W$ and clearly we have that 
\begin{equation} \label{obviously}
	k_\O(\gamma(t),\Omega\setminus U)\geq k_\O (\gamma(t),\Omega\setminus W).
\end{equation} 
{\it Claim.} There are constants $c_1,c_2$ so that $k_\O (\gamma(t),\Omega\setminus W)\geq  c_1 k_{\O\cap U}(\gamma(t),\O\setminus W)+ c_2$. 

This is proven in \cite{S} but for convenience we repeat the arguments. \\
Note that as $\O$ is hyperbolic at $p$, by Proposition \ref{notionsoflocalhyperbolicity} shrinking $V,W$ if necessary we can find a constant $C\geq 1$ so that \begin{equation}\label{comparingtheinfinitesimal}
	\k_{\O\cap U}(z;v)\leq C \k_{\O}(z;v)	\:\: \: \: \: \: z\in \O \cap \overline W, \: v\in \Cn
\end{equation}  
Fix $\delta,\epsilon >0$. For each $t$, choose $w_t\in \O\setminus W$ such that 
$ k_\O (\gamma(t),w_t )\leq k_\O(\gamma(t),\O\setminus W)+\delta $ 
and let $\sigma_t:I_t\rightarrow\O$ be an $\epsilon$-geodesic for $\O$ joining $\gamma(t)$ to $w_t$. 
Set $\tau_t:=\inf\{t'\in I_t: \sigma_t(t')\in\O\setminus W\}$ 
and $w'_t:=\sigma_t(\tau_t)$ and set $\sigma'_t:=\sigma_t |_{[0,\tau_t]}$. 
\\
Then by \eqref{comparingtheinfinitesimal} we obtain 
$$ k_{\Omega\cap U}(\gamma(t),\O\setminus W)\leq k_{\O\cap U}(\gamma(t),w'_t)\leq\lk_{\Omega\cap U}(\sigma'_t)\leq C \lk_\Omega(\sigma'_t) $$ $$\leq C\lk_\Omega(\sigma_t) \leq Ck_\O(\gamma(t),\O\setminus W)+C\epsilon+C\delta .$$ The estimate above finishes the proof of the claim. \\ Now, by \eqref{obviously} and the claim we obtain $$E \leq C' \int_{I} e^{-C''k_{\O\cap U}(\gamma(t),\O\setminus W)}\k_\O(\gamma(t);\gamma'(t))dt .$$
To continue, one can repeat the proof of the case where $p$ is $\lambda$-visible for $\O$ by applying weak Gromov property of $p$ with respect to $\O\cap U$ instead of $\O$ to see that also in this case $E$ is bounded. \\ By \eqref{estimatingthelength}, the theorem follows.
$\endofproof$ \\
\spc Lemma \ref{localizationoflengths} and its proof immediately lead to the following useful corollary. 
\begin{cor}\label{biggeodesicissmallgeodesic}
	Let $\O$ be a hyperbolic domain in $\Cn$, $p\in\partial \O$, and let $V\subset\subset U$ be two neighbourhoods of $p$ such that $U$ is bounded and we have $$ k_\O(\O\cap V,\O\setminus U) > 0.$$ Let $z,w\in \O\cap V$ and assume that $\gamma$ is an $(\lambda,\epsilon)$-geodesic for $\O$ joining $z$ to $w$ whose image lies in $\O\cap V$. Then there exists an $\epsilon'\geq \epsilon$ such that $\gamma$ is an $(\lambda,\epsilon')$-geodesic for $\O\cap U$. 
	
\end{cor}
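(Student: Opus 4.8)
The plan is to bypass the visibility hypotheses of Lemma \ref{localizationoflengths} entirely and reuse only the opening estimate of its proof, namely Sarkar's inequality \eqref{sarkarsestimate}, which is available under the sole hypothesis $k_\O(\O\cap V,\O\setminus U)>0$ of the corollary. The key observation is that since the whole image of $\gamma$ stays in $\O\cap V$, the quantity $k_\O(\gamma(t),\O\setminus U)$ is bounded below uniformly in $t$, so \eqref{sarkarsestimate} degenerates from an error term controlled by visibility into a plain multiplicative comparison of infinitesimal lengths along $\gamma$.

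First I would set $m:=k_\O(\O\cap V,\O\setminus U)>0$, so that $k_\O(\gamma(t),\O\setminus U)\geq m$ for every $t\in I$. Plugging this into \eqref{sarkarsestimate} gives, with $\mu:=1+Ce^{-m}\geq 1$, the pointwise bound $\k_{\O\cap U}(\gamma(t);\gamma'(t))\leq \mu\,\k_\O(\gamma(t);\gamma'(t))$ for almost every $t$. Integrating over an arbitrary subinterval $[t_1,t_2]\subset I$ yields the multiplicative length comparison
$$\lk_{\O\cap U}(\gamma|_{[t_1,t_2]})\leq \mu\,\lk_\O(\gamma|_{[t_1,t_2]}).$$

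Next I would combine this with the hypothesis that $\gamma$ is a $(\lambda,\epsilon)$-geodesic for $\O$ and with the monotonicity $k_\O\leq k_{\O\cap U}$ valid on $\O\cap U$. The geodesic inequality gives $\lk_\O(\gamma|_{[t_1,t_2]})\leq \lambda k_\O(\gamma(t_1),\gamma(t_2))+\epsilon$, hence
$$\lk_{\O\cap U}(\gamma|_{[t_1,t_2]})\leq \mu\lambda\, k_\O(\gamma(t_1),\gamma(t_2))+\mu\epsilon.$$
The main obstacle is that $\mu$ multiplies $\lambda$, while the conclusion must retain the same $\lambda$. To resolve this I would split $\mu\lambda\,k_\O=\lambda\,k_\O+(\mu-1)\lambda\,k_\O$, bound the first summand by $\lambda\,k_{\O\cap U}(\gamma(t_1),\gamma(t_2))$ using monotonicity, and absorb the second into an additive constant via $k_\O(\gamma(t_1),\gamma(t_2))\leq \lk_\O(\gamma|_{[t_1,t_2]})\leq \lk_\O(\gamma)$. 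This total length is finite because $\gamma$ is a $(\lambda,\epsilon)$-geodesic joining $z$ to $w$, so $\lk_\O(\gamma)\leq\lambda k_\O(z,w)+\epsilon<\infty$. Setting $\epsilon':=\mu\epsilon+(\mu-1)\lambda\,\lk_\O(\gamma)\geq\epsilon$ then gives
$$\lk_{\O\cap U}(\gamma|_{[t_1,t_2]})\leq \lambda\,k_{\O\cap U}(\gamma(t_1),\gamma(t_2))+\epsilon'$$
for all $t_1\leq t_2$, which is exactly the claim. I would stress that $\epsilon'$ depends on $\gamma$ through its total $\O$-length; this is harmless, since the statement only asserts the existence of such an $\epsilon'$ and does not require uniformity, and it is precisely why no visibility-type control over the whole family of geodesics is needed here.
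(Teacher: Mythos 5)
Your argument is correct as a proof of the statement exactly as quantified, but it takes a genuinely different route from the paper and yields a strictly weaker conclusion. The paper deduces the corollary from Lemma \ref{localizationoflengths}: for $(\lambda,\epsilon)$-geodesics lying in $\O\cap V$ it has the additive length bound $\lk_{\O\cap U}(\gamma)\leq \lk_\O(\gamma)+C$ with $C$ depending only on $U$, $V$ and $\lambda$, so that $\epsilon'=\epsilon+C$ is \emph{uniform} over the whole family of such geodesics (this uses the weak Gromov property, hence the visibility-type hypotheses that the corollary implicitly inherits from the lemma). You instead bypass the lemma entirely: Sarkar's estimate \eqref{sarkarsestimate} combined with the uniform lower bound $k_\O(\gamma(t),\O\setminus U)\geq k_\O(\O\cap V,\O\setminus U)>0$ gives a multiplicative comparison of infinitesimal lengths with factor $\mu$, and you then convert the factor $\mu\lambda$ back to $\lambda$ by absorbing $(\mu-1)\lambda\, k_\O(\gamma(t_1),\gamma(t_2))\leq(\mu-1)\lambda\,\lk_\O(\gamma)$ into the additive constant. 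The computation is sound: the subinterval form of the geodesic inequality, the monotonicity $k_\O\leq k_{\O\cap U}$ on $\O\cap U$, and the finiteness of $\lk_\O(\gamma)\leq\lambda k_\O(z,w)+\epsilon$ are all correctly invoked, and your proof has the merit of requiring nothing beyond the stated separation condition.

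The caveat is precisely the non-uniformity you flag as harmless. It is harmless for the literal statement, but not for its use: in the proof of Theorem \ref{definitionsoflambdavisibilityareequivalent} the corollary is applied to a sequence of truncated geodesics $\gamma_n|_{[0,\tau_n]}$ joining $z_n\to p$ to points converging to another boundary point, and one needs a \emph{single} $\epsilon'$ valid for all $n$ in order to contradict $\lambda$-visibility of that pair. Since $\lk_\O(\gamma_n|_{[0,\tau_n]})$ is in general unbounded in $n$, your $\epsilon'_n=\mu\epsilon+(\mu-1)\lambda\,\lk_\O(\gamma_n|_{[0,\tau_n]})$ may blow up and the downstream argument would fail. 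So if you keep your elementary route you should either restrict to families of geodesics with uniformly bounded $\O$-length, or accept that you have proved a per-curve version of the corollary rather than the uniform one the paper actually relies on.
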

\begin{proof}
	Observe that in the proof of Lemma \ref{localizationoflengths} we showed that if $\gamma$ is an $(\lambda,\epsilon)$-geodesic for $\O$ that lies in $\O\cap V$ then there exists a constant $C \geq 0$ such that $\lk_{\Omega\cap U}(\gamma)\leq \lk_\O(\gamma)+C $ and $C$ depends on $U,V$ and $\lambda$. \\
	Now, as $\gamma$ is an $(\lambda,\epsilon)$-geodesic for $\O$ we have 
	$$ \lk_{\O\cap U}(\gamma)\leq \lk_\O(\gamma) + C \leq \lambda k_\O(z,w) + C + \epsilon \leq \lambda k_{\O\cap U}(z,w)+ C + \epsilon .$$ Setting $\epsilon'=\epsilon+ C$ we are done. 
\end{proof}
\textit{Proof of Theorem \ref{localizationofkobayashidistance}.}
\emph{We first assume that $p$ is a $\lambda$-visible point for $\O$.} \\
Let $U$ be a bounded neighbourhood of $p$. By Proposition \ref{visibilityandgrowthofkobayashi}, we have that $\O$ is hyperbolic at $p$ so we can find a $V\subset\subset 
U$ satisfying $$k_\O(\O\cap V,\O\setminus U) > 0 .$$\\
We first assume that $\gamma$ is an $\epsilon$-geodesic for $\O$ which joins $z,w\in \O\cap V$. 
Observe that if the image of $\gamma$ lies entirely in $\O\cap V$ by Lemma \ref{localizationoflengths}  we have 
$$ k_{\O\cap U}(z,w) \leq \lk_{\O\cap U}(\gamma) \leq \lk_\O(\gamma)+C \leq k_{\O}(z,w)+\epsilon+C $$ so in this case the localization holds. \\
Now to get a contradiction, we assume that there exists $z_n,w_n\subset \O\cap V$ such that 
$$ k_{\O\cap U}(z_n,w_n)-k_{\O}(z_n,w_n)\rightarrow \infty .$$ 
Let $\gamma_n:I_n\rightarrow \O$ be $\epsilon$-geodesics for $\O$ joining $z_n$ to $w_n$. By the observation above, by looking at a subsequence if necessary we observe that we must have $\gamma_n(I_n)\not\subset V$. Set $\tau^1_n:=\inf\{t\in I_n: \gamma_n(t)\in \O\setminus V\}$, $z'_n:=\gamma_n(\tau^1_n)$ and $\tau^2_n:=\sup\{t\in I_n:= \gamma_n(t)\in\O\setminus V\}$, $w'_n:=\gamma_n(\tau^2_n)$. Let $\gamma^1_n$,$\gamma^2_n$ be the parts of $\gamma_n$ that join $z_n$ to $z'_n$ and $w_n$ to $w'_n$ respectively.\\
As the curves $\gamma_n$ are $\epsilon$-geodesics for $\O$, by the above we have 
$$
k_\O(z_n,w_n)+\epsilon \geq \lk_\O(\gamma) = \lk_\O(\gamma^1_n)+\lk_\O(\gamma^2_n)+ \lk_\O(\gamma_n\setminus{\gamma^1_n\cup\gamma^2_n}) $$ 
$$\geq \lk_{\O\cap U}(\gamma^1_n)+\lk_{\O\cap U}(\gamma^2_n)+k_\O(z'_n,w'_n)-2C $$ \begin{equation}\label{laststepoflocalization}
	\geq k_{\O \cap U}(z_n,z'_n) + k_{\O \cap U}(z'_n,w'_n)+k_\O(z'_n,w'_n)-2C
\end{equation}
Suppose that $\{z'_n,w'_n\}_{n\in \mathbb{N}}$ is compact in $\O\cap U$. Then adding and substracting $k_{\O\cap U}(z'_n,w'_n)$ to the above inequality, by the triangle inequality, our assumption fails. Therefore, by looking at a subsequence if necessary, we  assume $z'_n\rightarrow s, w'_n\rightarrow r$ and either $s$, $r$ or both lie in $(\partial \O)\cap U$. As the approach is the same for all cases, we will provide the proof for the case where both $s,r\in (\partial \O)\cap U$. \\
Since $p$ is a visible point for $\O$, we can find $U'\subset\subset V$ such that any $\epsilon$-geodesic for $\O$ joining points in $\O\cap U'$ to points in $\O\setminus V$ meets a compactum $K\subset\subset\O$. By looking at a subsequence if necessary, we assume that $z_n,w_n\in U'$, so weak visibility of $p$ for $\O$ applies to the $\epsilon$-geodesics $\{\gamma^1_n,\gamma^2_n\}_{n\in\mathbb{N}}$. Thus, we can take $\tilde{z}_n,\tilde{w}_n$ so that they remain in the intersection of images of $\gamma^1_n,\gamma^2_n$ with $K$. By construction images of $\gamma^1_n,\gamma^2_n$ remain in $\O\cap V$ so we have $\{\tilde{z}_n,\tilde{w}_n\}_{n\in \mathbb{N}}\subset\subset \O\cap U$.  
\\
A similar calculation to \eqref{laststepoflocalization} leads to 
$$	k_\O(z_n,w_n)+\epsilon \geq 
k_{\O \cap U}(z_n,\tilde{z}_n) + k_{\O \cap U}(w_n,\tilde{w}_n)+k_\O(\tilde{z}_n,\tilde{w}_n)-2C
$$ 
$$
\geq k_{\O \cap U}(z_n,\tilde{z}_n) + k_{\O \cap U}(w_n,\tilde{w}_n)+k_\O(\tilde{z}_n,\tilde{w}_n) 
+ k_{\O\cap U}(\tilde{z}_n, \tilde{w}_n)-k_{\O \cap U}(\tilde{z}_n,\tilde{w}_n) -2C 
$$ 
$$
\geq k_{\O\cap U}(z_n,w_n)-C' ,
$$
where the last inequality follows from the triangle inequality and the fact that 
$\{\tilde{z}_n,\tilde{w}_n\}_{n\in \mathbb{N}}\subset\subset \O\cap U$. 
We therefore conclude that in the case where $p$ is $\lambda$-visible for $\O$, such a sequence can not exist. 
\\
\emph{We now assume that $p$ is $\lambda$-visible for $\O\cap U$ and $\O$ is hyperbolic at $p$.} \\ We choose $V\subset\subset U$ such that $k_\O(\O\cap V,\O\setminus U)>0$. To get a contradiction, we assume that localization fails. Thus, we can find points $z_n,w_n$ in $\O\cap V$ such that $$k_{\O\cap U}(z_n,w_n)-k_\O(z_n,w_n)\rightarrow \infty .$$ Repeating the construction above we see that $\epsilon$-geodesics joining $z_n$ to $w_n$ must leave $\O\cap V$ and \eqref{laststepoflocalization} holds also on this case. We may apply a similar visibility argument however we claim that using the weak Gromov property of $p$ with respect to $\O\cap U$ is enough. To see this observe that by \eqref{laststepoflocalization} we have 
\begin{equation}\label{lastlaststepreally}
	k_\O(z_n,w_n)\geq k_{\O\cap U}(z_n,z'_n)+ k_{\O\cap U}(w_n,w'_n) -2C .\end{equation}
Fix $o\in \O\cap V$ and shrink $V$ if necessary to get by weak Gromov property of $p$ with respect to $\O\cap U$ a constant $c\leq 0$ such that $$k_{\O\cap U}(z_n,z'_n)\geq k_{\O\cap U}(z_n,o) + c \: \text{and} \: k_{\O\cap U}(w_n,w'_n) \geq k_{\O\cap U}(w_n,o) + c .$$ Having this in mind and continuing \eqref{lastlaststepreally}, by triangle inequality we obtain $$k_\O(z_n,w_n)\geq k_{\O\cap U}(z_n,z'_n)+ k_{\O\cap U}(w_n,w'_n) -2C $$ $$ \geq k_{\O\cap U}(z_n,o)+k_{\O\cap U}(w_n,o)- 2C+2c \geq k_{\O\cap U}(z_n,w_n)-2C+2c .$$ 
We see that such a sequence cannot exist. This finishes the proof of this case. Hence, we have the theorem. 
$\endofproof$ \\
\spc Theorem \ref{localizationofkobayashidistance} gives the following corollary, which can be seen as a converse to Corollary \ref{biggeodesicissmallgeodesic}.
\begin{cor}\label{smallgeodesicisbiggeodesic}
	Let $\O$ be a hyperbolic domain and $p\in\partial \O$. Let $V\subset\subset U$ be two neighbourhoods of $p\in\partial \Omega$ such that additive localization holds for $U,V$, that is, there exists a $C\geq 0$ such that 
	$$ k_{\O\cap U}(z,w)\leq k_{\O}(z,w)+ C $$ for all $z,w\in \O\cap V$. For any $\epsilon > 0$, there exists an $\epsilon' \geq \epsilon$ such that any $(\lambda,\epsilon)$-geodesic $\gamma:I\rightarrow \O\cap V$ for $\Omega\cap U$ is a $(\lambda,\epsilon')$-geodesic for $\O$. 
\end{cor}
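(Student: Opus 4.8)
The plan is to verify directly the defining inequality of a $(\lambda,\epsilon')$-geodesic for $\O$ on every subinterval $[t_1,t_2]\subset I$, with a single constant $\epsilon'$ independent of $t_1,t_2$. The three ingredients are the monotonicity of the Kobayashi--Royden length under the inclusion $\O\cap U\subset\O$, the hypothesis that $\gamma$ is a $(\lambda,\epsilon)$-geodesic for $\O\cap U$, and the additive localization assumption on $\O\cap V$. No delicate estimate is needed; the argument is a chaining of these three facts in the right direction.

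First I would record the metric comparison coming from the inclusion $\O\cap U\subset\O$: every competitor disc $\vp\in\hol(\Delta,\O\cap U)$ is also a competitor disc for $\O$, so $\k_\O(x;v)\le\k_{\O\cap U}(x;v)$ for all $x\in\O\cap U$ and $v\in\Cn$. Integrating along $\gamma$, whose image lies in $\O\cap V\subset\O\cap U$, this gives
$$\lk_\O(\gamma|_{[t_1,t_2]})\le\lk_{\O\cap U}(\gamma|_{[t_1,t_2]})$$
for every subinterval $[t_1,t_2]$.

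Next I would apply the $(\lambda,\epsilon)$-geodesic property of $\gamma$ for $\O\cap U$, namely $\lk_{\O\cap U}(\gamma|_{[t_1,t_2]})\le\lambda\, k_{\O\cap U}(\gamma(t_1),\gamma(t_2))+\epsilon$. Since $\gamma(t_1),\gamma(t_2)\in\O\cap V$ and they are joined by the subcurve $\gamma|_{[t_1,t_2]}$ inside $\O\cap V\subset\O\cap U$ (so they lie in the same component of $\O\cap U$), the additive localization hypothesis applies and yields $k_{\O\cap U}(\gamma(t_1),\gamma(t_2))\le k_\O(\gamma(t_1),\gamma(t_2))+C$. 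Combining the three estimates gives
$$\lk_\O(\gamma|_{[t_1,t_2]})\le\lambda\, k_\O(\gamma(t_1),\gamma(t_2))+\lambda C+\epsilon,$$
so $\epsilon':=\epsilon+\lambda C$ works, uniformly in $t_1\le t_2$.

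I do not expect a serious obstacle: the whole proof is a one-directional chain of inequalities, and the only point to watch is that the localization constant $C$ furnished by the hypothesis is \emph{uniform} over all pairs in $\O\cap V$, which is precisely what allows a single $\epsilon'$ to serve for every subinterval simultaneously. This is exactly the converse to Corollary \ref{biggeodesicissmallgeodesic}, with the roles of $\O$ and $\O\cap U$ (and correspondingly the metric comparison versus the distance comparison) interchanged.
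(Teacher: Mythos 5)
Your proposal is correct and follows essentially the same argument as the paper: the chain $\lk_{\O}(\gamma)\leq \lk_{\O\cap U}(\gamma) \leq \lambda k_{\O\cap U}(z,w)+\epsilon \leq \lambda k_\O(z,w)+\lambda C+\epsilon$, yielding $\epsilon'=\epsilon+\lambda C$. Your version is slightly more careful in checking the inequality on every subinterval (and in noting that the endpoints lie in the same component of $\O\cap U$), but the substance is identical.
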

\begin{proof}
	Let $\gamma$ be as above and let $z,w$ be endpoints of the image of $\gamma$. Then by Theorem \ref{localizationofkobayashidistance} we obtain
	$$ \lk_{\O}(\gamma)\leq \lk_{\O\cap U}(\gamma) \leq \lambda \k_{\O\cap U}(z,w)+\epsilon \leq \lambda \k_\O(z,w) + \lambda C+\epsilon.$$
	Setting $\epsilon':=\epsilon+ \lambda C$, we are done. 
\end{proof}
One may observe that the weak Gromov property plays a key role in the proofs above. In fact, statements (2) of both Theorem \ref{localizationofkobayashidistance}  and Lemma \ref{localizationoflengths} still hold when one replaces "being a visible point for" with "satisfying weak Gromov property for". Moreover, statement (1) of Lemma \ref{localizationoflengths} also holds under the same change, however with the additional assumption that $\O$ being hyperbolic at $p$. It is unclear that if conditions in statement (1) of Theorem \ref{localizationofkobayashidistance} can be relaxed. \\
\spc As a conclusion for this section we will present a new multiplicative localization result. 
\begin{thm}\label{multiplicativelocalization}
	Let $\O\subset \Cn$ be a hyperbolic domain, $p\in\partial \Omega$. Assume that one or both of the followings hold. 
	\begin{enumerate}
		\item $p$ is a $\lambda$-visible point for $\Omega$. 
		\item There exists a neighbourhood of $p$, $U \subset \Cn$ such that $p$ is a $\lambda$-visible point for $\O\cap U$ and $\O$ is hyperbolic at $p$. 
	\end{enumerate}
	Then there exists a neighbourhood $V\subset\subset U$ of $p$ and a constant $C\geq 1$ such that 
	$$ k_{\O\cap U}(z,w)\leq C k_\O (z,w) $$ for all $z,w\in\O\cap V$.  
\end{thm}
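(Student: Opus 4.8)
The plan is to derive the multiplicative estimate from the additive one already established in Theorem \ref{localizationofkobayashidistance}, by splitting the pair $(z,w)$ into two regimes according to the size of $k_\O(z,w)$. The only extra ingredient needed beyond Theorem \ref{localizationofkobayashidistance} is that $\O$ is hyperbolic at $p$ in both cases: in case (1) this is given by Proposition \ref{visibilityandgrowthofkobayashi}, while in case (2) it is assumed. Note first that enlarging $U$ only decreases $k_{\O\cap U}$, so we may shrink $U$ and assume it bounded; and Theorem \ref{localizationofkobayashidistance} then supplies, in both cases, a neighbourhood $V_0\subset\subset U$ and a constant $C_1\geq 0$ with $k_{\O\cap U}(z,w)\leq k_\O(z,w)+C_1$ for $z,w\in\O\cap V_0$ lying in the same component of $\O\cap U$.

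First I would fix the geometry. Invoking Proposition \ref{notionsoflocalhyperbolicity}, hyperbolicity at $p$ lets me choose nested neighbourhoods $V\subset\subset W\subset\subset U$ with $V\subset V_0$ such that both $\d_0:=k_\O(\O\cap V,\O\setminus W)>0$ and $k_\O(\O\cap W,\O\setminus U)>0$. The second positivity feeds the infinitesimal estimate \eqref{sarkarsestimate}: since $k_\O(x,\O\setminus U)\geq k_\O(\O\cap W,\O\setminus U)>0$ for every $x\in\O\cap W$, there is a constant $C_0\geq 1$ with $\k_{\O\cap U}(x;v)\leq C_0\,\k_\O(x;v)$ for all $x\in\O\cap W$ and $v\in\Cn$.

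Next, for $z,w\in\O\cap V$ I would separate two cases. If $k_\O(z,w)\geq\d_0/2$, the additive bound gives immediately $k_{\O\cap U}(z,w)\leq k_\O(z,w)+C_1\leq(1+2C_1/\d_0)\,k_\O(z,w)$. If instead $k_\O(z,w)<\d_0/2$, I would pick, for small $\e>0$, a curve $\gamma$ joining $z$ to $w$ in $\O$ with $\lk_\O(\gamma)\leq k_\O(z,w)+\e<\d_0$. Such a short curve cannot leave $\O\cap W$: if it reached a point $y\in\O\setminus W$, the sub-arc from $z\in\O\cap V$ to $y$ would have length at least $k_\O(\O\cap V,\O\setminus W)=\d_0$, contradicting $\lk_\O(\gamma)<\d_0$. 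Hence $\gamma(I)\subset\O\cap W$, where the infinitesimal comparison is valid, so $k_{\O\cap U}(z,w)\leq\lk_{\O\cap U}(\gamma)\leq C_0\,\lk_\O(\gamma)\leq C_0\,(k_\O(z,w)+\e)$; letting $\e\to 0$ yields $k_{\O\cap U}(z,w)\leq C_0\,k_\O(z,w)$. Putting $C:=\max\{1+2C_1/\d_0,\,C_0\}\geq 1$ then gives the claim on $\O\cap V$.

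The hard part will be the small-distance regime. The estimate \eqref{sarkarsestimate} holds only on $\O\cap W$, so it can be integrated along $\gamma$ precisely because $\gamma$ has been confined to $\O\cap W$; and this confinement is exactly what hyperbolicity at $p$ buys us, through the separation $k_\O(\O\cap V,\O\setminus W)=\d_0>0$ coming from Proposition \ref{notionsoflocalhyperbolicity}. Without this, a near-optimal curve between two Kobayashi-close points approaching the boundary could escape the region where the infinitesimal inequality is controlled. As in Theorem \ref{localizationofkobayashidistance}, we of course restrict throughout to $z,w$ lying in the same component of $\O\cap U$.
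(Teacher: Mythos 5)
Your proposal is correct and follows essentially the same route as the paper: both arguments split into the regime where $k_\O(z,w)$ is bounded below (where the additive localization of Theorem \ref{localizationofkobayashidistance} immediately yields the multiplicative bound) and the regime where a near-optimal curve is trapped near $p$ by the positive separation coming from hyperbolicity at $p$ (where the integrated form of the infinitesimal estimate \eqref{sarkarsestimate} gives the constant directly). The only cosmetic difference is that you enter the dichotomy via the size of $k_\O(z,w)$, whereas the paper enters it via whether the $\epsilon$-geodesic leaves the small neighbourhood; the two are equivalent and use the same ingredients.
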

\begin{proof}
	Due to Proposition \ref{visibilityandgrowthofkobayashi} and the assumption in (2) on both cases we can find a neighbourhood $V\subset\subset U$ of $p$ such that $$k_\O(\O\cap V,\O\setminus U)>0.$$ Let $V$ be such a set. It follows from the proof of Lemma \ref{localizationoflengths} that, if there are $\epsilon_n$-geodesics $\gamma_n$ joining $z\in \O\cap V$ to $w\in \O\cap V$ such that $\epsilon_n\rightarrow 0$ and the images of $\gamma_n$ remain in $\O\cap V$ there is a constant $C \geq 1$ such that 
	$$ k_{\O\cap U}(z,w)\leq C k_{\O}(z,w) .$$ so in this case multiplicative localization holds.  \\
	Let $V'\subset\subset V$ be a neighbourhood of $p$ such that there exists a constant $C_{V'}\geq 0$ such that we have $$ k_{\O\cap U}(z,w)\leq k_\O(z,w)+ C_{V'} $$ for any $z,w\in\O\cap V'$. Further, assume that $k_\O(\O\cap V',\O\setminus V)=:c\geq 0$. Note that we can choose such a neighbourhood due to Proposition \ref{notionsoflocalhyperbolicity} and Theorem \ref{localizationofkobayashidistance}. \\ 
	We suppose that $\gamma:I\rightarrow\O$ is an $\epsilon$-geodesic for $\O$ joining $z\in \O\cap V'$ to $w\in\O\cap V'$ where $\epsilon < c$ which leaves $\O\cap V$. Set $\tau:=\inf\{t\in \O: \gamma(t)\in \O\cap\partial V\}$ and $w':=\gamma(\tau)$. 
	Then, $$c < k_\O(\O\cap V', \O\setminus V)\leq k_\O(z,w') \leq \lk_{\O}(\gamma|_{[0,\tau]})\leq \lk_{\O}(\gamma) \leq k_\O(z,w) + \epsilon.$$ By above we obtain that $
	k_\O(z,w)\geq c-\epsilon$.
	As additive localization holds on $V'$, we have $$\dfrac{k_{\O\cap U}(z,w)}{k_\O(z,w)}\leq 1+ \dfrac{C_{V'}}{k_\O(z,w)}\leq 1+\dfrac{C_{V'}}{c-\epsilon} $$
	so $k_{\O\cap U}(z,w)\leq C' k_\O(z,w)$. Setting $V:=V'$ in the theorem, we are done. 
\end{proof}
By looking at the proof above, one may observe that if $k_\O(z,w)$ is bounded below, then additive localization is stronger than multiplicative localization. On the other hand, it is clear that if $k_\O(z,w)$ tends to $0$, multiplicative localization is a much better tool to compare $k_\O$ with $k_{\O\cap U}$. 
\section{Local visibility and global visibility}
\label{secvis}
The goal of this section is to show that visibility is a local property. Unlike the results given in \cite{BGNT}, we do not have any global assumptions such as Gromov hyperbolicity. Notably, the key elements in our proofs are Corollary \ref{biggeodesicissmallgeodesic} and Corollary \ref{smallgeodesicisbiggeodesic}. We will first prove the following:
\begin{thm}\label{definitionsoflambdavisibilityareequivalent}
	Let $\O\subset \Cn$ be a hyperbolic domain, $\lambda\geq 1$ and assume that $\O$ is hyperbolic at $p\in\partial \O$. Then the followings are equivalent. 
	\begin{enumerate}
		\item $p$ is a $\lambda$-visible point for $\O$.
		\item There exists a bounded neighbourhood  $U$ of $p$ such that $p$ is a $\lambda$-visible point for $\O\cap U$. 
		\item For any bounded neighbourhood $U$ of $p$, $p$ is a $\lambda$-visible point for $\O\cap U$. 
	\end{enumerate}
\end{thm}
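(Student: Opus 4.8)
The plan is to establish the cycle $(1)\Rightarrow(3)\Rightarrow(2)\Rightarrow(1)$; the implication $(3)\Rightarrow(2)$ is immediate, since $(3)$ asserts the property for \emph{every} bounded neighbourhood. Both nontrivial implications have the same shape: I assume visibility fails for the \emph{target} domain, extract a sequence of $(\lambda,\epsilon)$-geodesics for it escaping every compactum, truncate them so as to remain in a fixed localization neighbourhood $V$, convert the surviving pieces into almost-geodesics for the \emph{source} domain via Corollary \ref{biggeodesicissmallgeodesic} or Corollary \ref{smallgeodesicisbiggeodesic}, and thereby contradict visibility of the source. The additive localization driving these conversions comes from Theorem \ref{localizationofkobayashidistance}, whose hypotheses hold in each case (recall a $\lambda$-visible point is a point of hyperbolicity, by Proposition \ref{visibilityandgrowthofkobayashi}). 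I would first record a monotonicity reduction: if the $(\lambda,\epsilon)$-visibility condition for a domain holds for a bounded neighbourhood $U_0$ of $p$ with data $V_0\subset\subset U_0$ and a compactum $K$, then it holds for every bounded $U_1\supseteq U_0$ with the same data, because a $(\lambda,\epsilon)$-geodesic from $V_0$ to the complement of $U_1$ must exit $U_0$, and its subcurve up to the first such exit is again a $(\lambda,\epsilon)$-geodesic to which the $U_0$-condition applies. Since any $U_1$ contains some $U_0\subset\subset V$, it then suffices, when proving visibility of a domain at $p$, to verify the condition only for neighbourhoods $U_1\subset\subset V$, so that a single scale $V\subset\subset U$ serves all tests.

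For $(2)\Rightarrow(1)$, Theorem \ref{localizationofkobayashidistance}(2) supplies $V\subset\subset U$ with additive localization $k_{\O\cap U}\le k_\O+C$ on $\O\cap V$ and $k_\O(\O\cap V,\O\setminus U)>0$. If $p$ failed to be $\lambda$-visible for $\O$ at a test neighbourhood $U_1\subset\subset V$, I would obtain $z_n\to p$, points $w_n\in\O\setminus U_1$, and $(\lambda,\epsilon)$-geodesics $\gamma_n$ for $\O$ escaping every compactum of $\O$. Truncating each $\gamma_n$ at its first exit of $V$, the surviving piece lies in $\O\cap\ov V$ and, by Corollary \ref{biggeodesicissmallgeodesic}, is a $(\lambda,\epsilon')$-geodesic for $\O\cap U$; its near endpoint eventually lies in the inner neighbourhood $V_1$ provided by the assumed visibility of $\O\cap U$, and its far endpoint (either $w_n$ or a point of $\partial V$) lies in $(\O\cap U)\setminus U_1$. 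Moreover it still escapes every compactum of $\O\cap U$, because a compactum of $\O\cap U$ is a compactum of $\O$. This contradicts $\lambda$-visibility of $p$ for $\O\cap U$.

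For $(1)\Rightarrow(3)$, fix any bounded $U$; Theorem \ref{localizationofkobayashidistance}(1) again yields $V\subset\subset U$ with additive localization. A failure of $\lambda$-visibility of $p$ for $\O\cap U$ at a test neighbourhood $U_1\subset\subset V$ produces $(\lambda,\epsilon)$-geodesics for $\O\cap U$ from $z_n\to p$ to $(\O\cap U)\setminus U_1$, escaping every compactum of $\O\cap U$. Truncating at the first exit of $V$ and applying Corollary \ref{smallgeodesicisbiggeodesic} (legitimate since additive localization holds for $U,V$) converts the pieces into $(\lambda,\epsilon')$-geodesics for $\O$ from $\O\cap V_1$ to $\O\setminus U_1$. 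Here the preservation of ``escaping compacta'' runs in the harder direction: given $K\subset\subset\O$, the set $K\cap\ov V$ is a compactum of $\O\cap U$ (it misses $\partial\O$ through $K$ and $\partial U$ through $\ov V$), and since the truncated curves lie in $\ov V$, escaping $K\cap\ov V$ forces escaping $K$. This contradicts $\lambda$-visibility of $p$ for $\O$, closing the cycle.

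The step I expect to be most delicate is exactly this bookkeeping of compacta under the two conversions, together with the reduction to small neighbourhoods that lets one localization scale $V$ cover all test neighbourhoods; in the $\O\to\O\cap U$ direction the transfer is automatic, but in the $\O\cap U\to\O$ direction it genuinely needs the $K\cap\ov V$ device and the fact that the truncated curves stay in $\ov V$. A minor technical wrinkle is that truncation places the far endpoint on $\partial V$ rather than inside $V$; this is harmless, since the length comparison underlying the two Corollaries is an integral to which a single boundary point contributes nothing, and in any case one may truncate at a neighbourhood slightly smaller than $V$.
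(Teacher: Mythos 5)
Your proof is correct and follows essentially the same route as the paper's: both nontrivial implications are proved by contradiction, truncating the escaping almost-geodesics at their first exit from a localization neighbourhood $V$ supplied by Theorem \ref{localizationofkobayashidistance}, and converting the truncated pieces via Corollaries \ref{biggeodesicissmallgeodesic} and \ref{smallgeodesicisbiggeodesic}. The only difference is organizational: the paper channels the contradiction through the visible-pair characterization of Proposition \ref{ourdefinitionandolderdefinition} (extracting limit boundary points $r$ of the truncation endpoints), whereas you work directly from the definition with a monotonicity reduction and an explicit $K\cap\ov V$ bookkeeping of compacta, which is, if anything, slightly more careful than the paper on that point.
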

\begin{proof}
	The implication that (3) $\implies$ (2) is clear. \\
	We will show that (2) implies (1) by contradiction. To see this assume that $p$ is not $\lambda$-visible for $\O$. Then by Proposition \ref{ourdefinitionandolderdefinition} we can find a point $q\in\partial \O$ such that $q\neq p$ and sequences $z_n\rightarrow p$, $w_n\rightarrow q$ and $(\lambda,\epsilon)$-geodesics $\gamma_n:I_n\rightarrow \O$ joining $z_n$ to $w_n$ such that $\gamma_n(I_n)$ avoids $K_n$ where the sequence $\{K_n\}_{n\in\mathbb{N}}$ satisfy $K_n\subset\subset K_{n+1} \subset\subset \O$ and $\O = \bigcup_n K_n$. 
	Now, take another neighbourhood $V\subset\subset U$ of $p$. By choosing $V$ small enough we assume that $w_n\notin V$ and by hyperbolicity of $\O$ at $p$, we can also assume that $k_\O(\O\cap V,\O\setminus U) > 0$. Set $\tau_n:=\inf\{t\in I_n: \gamma_n(t)\in (\partial V)\cap \O\}$ and $w'_n:=\gamma_n(\tau_n)$. Due to our assumption, passing to a subsequence if necessary, we assume $\lim_{n\rightarrow\infty} w'_n:=r \in (\partial\O)\cap \ov{V}$. \\
	Now, consider the curves $\gamma_n |_{[0,\tau_n]}$. By 
	construction they are $(1,\epsilon)$-geodesics for $\O$ whose image remain in $\O\cap V$. By Corollary \ref{biggeodesicissmallgeodesic}, there is an $\epsilon' > 0$ such they are $(1,\epsilon')$-geodesics for $\O\cap U$. We observe that for the pair $\{p,r\}\subset(\partial \O\cap U)$, we can find $(\lambda,\epsilon')$-geodesics for $\O\cap U$, joining points tending to $p$ and $r$ respectively. By construction, they eventually avoid any compact set in $\O\cap U$. This shows that $\{p,r\}$ is not a $\lambda$-visible pair for $\O\cap U$. By Proposition \ref{ourdefinitionandolderdefinition}, $p$ is not a $\lambda$-visible point for $\O\cap U$. This is a contradiction, we must have (2) $\implies$ (1). \\
	Now we will show that (1) $\implies$ (3). The proof is very similar to above. \\
	Let $U$ be any bounded neighbourhood of $p$. To get a contradiction we suppose that there exists sequences $z_n\rightarrow p$, $w_n\rightarrow q\in \partial{\O\cap U}$ with $q\neq p$, and $(\lambda,\epsilon)$-geodesics for $\O\cap U$ $\gamma_n:I_n\rightarrow \O\cap U$ joining $z_n$ to $w_n$ eventually avoiding any compact set in $\O\cap U$. \\
	By Theorem \ref{localizationofkobayashidistance}, we can find another neighbourhood $V\subset\subset U$ of $p$ and a constant $C\geq 0$ satisfying $$k_{\O\cap U}(z,w)\leq k_\O(z,w)+C $$ for any $z,w\in \O\cap V$. \\
	By shrinking $V$ if necessary, we assume that $w_n\notin \O\cap V$. As above, we consider the restrictions of image of $\gamma_n$ to $\O\cap V$. Denote those curves $\gamma'_n$ and set $w'_n$ to be the point where $\gamma'_n$ leaves $V$. Our assumption gives that by passing to a subsequence if necessary we have $w'_n\rightarrow r \in \overline{V}\cap \partial \O$. By Corollary \ref{smallgeodesicisbiggeodesic}, there exists an $\epsilon'$ such that each $\gamma'_n$ is a $(\lambda,\epsilon')$-geodesic for $\O$.  We observe that we can find $(\lambda,\epsilon')$-geodesics for $\O$ joining $\{p,r\}\subset \partial \O$ which tend to $\partial\O$. Thus, $\{p,q\}$ is not a $\lambda$-visible pair for $\O$. By Proposition \ref{ourdefinitionandolderdefinition}, this contradicts with the fact that $p$ is a $\lambda$-visible point for $\O$, and we see that (1) $\implies$ (3). We are done. 
\end{proof}
By Theorem \ref{definitionsoflambdavisibilityareequivalent} it is easy to see that $\lambda$-visibility of a boundary point is independent from the domain itself. More formally we have:
\begin{thm}\label{visibilityisapropertyofthepoint} 	Let $\O\subset \Cn$ be a hyperbolic domain which is hyperbolic at $p\in\partial \O$. Then $p$ is a  $\lambda$-visible point for $\O$ if and only if for any $D\in [\O]_p$, we have that $p$ is a $\lambda$-visible point for $D$.   
\end{thm}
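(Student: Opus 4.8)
The plan is to deduce this statement directly from the equivalence established in Theorem~\ref{definitionsoflambdavisibilityareequivalent}. The guiding idea is that $\lambda$-visibility of $p$ for a domain is, by that theorem, detected by any localized domain $\O\cap U$, and the localized domain depends only on the germ $[\O]_p$; so visibility at $p$ should be an invariant of the equivalence class.

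For the nontrivial (``only if'') direction I would fix an arbitrary $D\in[\O]_p$. By the definition of the equivalence relation, $D$ is a hyperbolic domain with $p\in\partial D$, $D$ is hyperbolic at $p$, and there is a bounded neighbourhood $U$ of $p$ with $\O\cap U=D\cap U$. Since $\O$ is hyperbolic at $p$, I may apply the implication (1)$\implies$(3) of Theorem~\ref{definitionsoflambdavisibilityareequivalent} to $\O$: the hypothesis that $p$ is a $\lambda$-visible point for $\O$ yields that $p$ is a $\lambda$-visible point for $\O\cap U$. The crucial step is then purely set-theoretic: because $\O\cap U=D\cap U$, these are literally the same domain, so $p$ is a $\lambda$-visible point for $D\cap U$ as well.

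It remains to transfer this back to $D$. Since $D$ is hyperbolic at $p$, Theorem~\ref{definitionsoflambdavisibilityareequivalent} applies with $D$ in the role of the ambient domain, and what I have just verified is precisely statement~(2) for $D$ (there exists a bounded neighbourhood, namely $U$, for which $p$ is $\lambda$-visible for $D\cap U$). Hence the implication (2)$\implies$(1) for $D$ gives that $p$ is a $\lambda$-visible point for $D$, as desired. For the ``if'' direction I would simply observe that the equivalence relation is reflexive, so $\O\in[\O]_p$; taking $D=\O$ in the hypothesis immediately gives that $p$ is $\lambda$-visible for $\O$.

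I do not expect any genuine obstacle here: once Theorem~\ref{definitionsoflambdavisibilityareequivalent} is available, both directions are formal. The only points requiring care are bookkeeping of the hyperbolicity hypotheses---one must check that $D$ is hyperbolic at $p$, which is built into the definition of $[\O]_p$, before invoking Theorem~\ref{definitionsoflambdavisibilityareequivalent} for $D$---and the observation that the neighbourhood witnessing $\O\cap U=D\cap U$ can be used verbatim, which is exactly what the ``for any bounded neighbourhood'' form of statement~(3) provides.
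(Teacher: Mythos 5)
Your argument is correct and coincides with the paper's own proof: both directions are obtained by combining the implications (1)$\implies$(3) and (2)$\implies$(1) of Theorem \ref{definitionsoflambdavisibilityareequivalent} with the identity $\O\cap U=D\cap U$, and the converse via reflexivity of the equivalence relation. No gaps; the bookkeeping of the hyperbolicity-at-$p$ hypothesis for $D$ is handled exactly as in the paper.
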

\begin{proof}
	The second statement clearly implies the first. \\
	To see the converse, we assume that $p$ is visible for $\O$. Let $D \in [\O]_p$. Then there exists a bounded neighbourhood $U$ of $p$ such that $\O\cap U = D\cap U$. Theorem \ref{definitionsoflambdavisibilityareequivalent} implies that $p$ is a visible point for $D\cap U$. By definition $D$ is also hyperbolic at $p$ so again by Theorem \ref{definitionsoflambdavisibilityareequivalent} we see that $p$ is a visible point for $D$. We are done.  
\end{proof}


\end{document}